 \title[Generalized Wolff's Ideal  Theorem]{A Generalized Wolff's Ideal  Theorem  on Certain Subalgebras of
$H^{\infty}(\D)$}
\author{Debendra P.  Banjade, Caleb D. Holloway, and Tavan T. Trent}
\address{Department of Mathematics and Statistics\\
         Coastal Carolina University \\
         P.O. Box 261954\\
         Conway, SC 29528-6054 \\
         (843) 349-6569}
\email{dpbandjade@coastal.edu}
\address{Department of Mathematical Sciences\\
	309 SCEN\\
	1 University of Arkansas\\
	Fayetteville, AR 72701\\
	(970) 943-2015}
\email{cdhollow@uark.edu}
\address{Department of Mathematics\\
         The University of Alabama\\
         Box 870350\\
         Tuscaloosa, AL  35487-0350\\
         (205) 758-4275}
\email{ttrent@as.ua.edu}
 \newtheorem{thm}{Theorem}[section]
\newtheorem*{thm*}{Theorem}  \newtheorem{cor}{Corollary}[section]
\newtheorem{lemma}{Lemma}[section]
\newtheorem{rem}{Remark}  
\newtheorem*{wolff}{Theorem A (Wolff)}
\newtheorem*{tr}{ Theorem B (Treil)}
\newcommand{\ran}{\operatorname{ran}}  \newcommand{\all}{\;
\forall \;} \newcommand{\exist}{\; \exists \;} \newcommand{\C}{\mathbb{C}}
\newcommand{\I}{\mathcal{I}} \newcommand{\D}{\mathbb{D}} \newcommand{\Z}{\mathbb{Z}}
  \newcommand{\rad}{\operatorname{Rad}}
\begin{document}

\maketitle

\begin{abstract} We prove the generalized Wolff's Ideal Theorem  on certain uniformly closed subalgebras of
$H^{\infty}(\D)$ on which the Corona Theorem is already known to hold. \end{abstract}

\section{Introduction}

Carleson's celebrated proof of the Corona Theorem \cite{carleson}, which gives necessary and sufficient conditions for unit membership in the ideal of $ H^{\infty}(\D) $ generated by a given set of functions, opened the door for several new questions. The first we will consider, which we call a ``generalized ideal problem,'' asks whether we can find  weaker conditions under which a given function $ h $ is included in the ideal. If not, can we at least find some $ p > 1 $ so that $ h^p $ belongs to the ideal? Also, are there other algebras for which a result similar to Carleson's holds?

The first two questions were proposed and answered (at least in part) by  Wolff
\cite{wolff} in a result we refer to as \lq\lq Wolff's Theorem.'' The third has been a topic of research
over the years, with varying results. (For examples of algebras on which a corona theorem holds, see
Tolokonnikov \cite{tolokonnikov} and Nikolski \cite{nikolski}(Appendix 3, P. 288) as well as Costea-Sawyer-Wick
\cite{druryarveson}; for some negative examples, see Scheinberg \cite{scheinberg} and Trent
\cite{anote}).
 
Carleson's Corona Theorem states that the ideal $ \I $ generated by a finite set of functions $
\lbrace f_i \rbrace_{i=1}^{n} \subset H^{\infty}(\D) $ is the entire space $ H^{\infty}(\D) $
provided that there exists $ \delta > 0 $ such that 
\begin{equation}  \label{eq:corona}
\left( \sum_{i=1}^{n} |f_i(z)|^2\right)^{\frac{1}{2}} \geq \delta  \text{ for all } \; \;  z \in \D \text{.} \end{equation} This result can be extended to hold for infinitely many
functions $ \lbrace f_i \rbrace_{i=1}^{\infty} $ (see \cite{rosenblum}, \cite{operatortheory}).

Under what conditions, then, could we expect a given function $ h \in
H^{\infty}(\D) $ to be found in $ \I $? One might suppose, based on Carleson's result, that  a
sufficient condition would be  
\begin{equation}\label{eq:wolff} 
\left( \sum_{i=1}^{n} |f_i(z)|^2\right)^{\frac{1}{2}} \geq |h(z)| \text{ for all } \; \;\;  z \in \D \text{.} 
\end{equation} 
Obviously, the condition  \eqref{eq:wolff} is necessary, but it is not sufficient. (See  Rao's example in Garnett  \cite{garnett}, P. 369, Ex-3.) However, Wolff  (see Garnett \cite{garnett}, P. 329, Theorem 2.3) proved that, given \eqref{eq:wolff}, $ h^3 \in \I .$

\begin{wolff}
If
\begin{align*}
f_j \in H^{\infty}(\mathbb{D}), j=1, 2, ..., n,\;   h \in H^{\infty}(\mathbb{D}) \quad \text{and}\notag\\
\left( \sum_{j=1}^n \, |f_j(z)|^2\right)^{\frac 1{2}} \geq |h(z)| \; \text{ for all } \; \;\;  z \in \mathbb{D},
\end{align*}
then
\[
h^3 \in \mathcal{I} ( \{ f_j\}_{j=1}^n),
\]

\medskip \noindent
the ideal generated by $\{f_j\}_{j=1}^n$ in $H^{\infty}(\mathbb{D}).$
\end{wolff}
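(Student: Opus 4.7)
The plan is to use Wolff's $\bar{\partial}$-construction to exhibit $g_{1}, \ldots, g_{n} \in H^{\infty}(\D)$ satisfying $\sum_{j=1}^{n} g_{j} f_{j} = h^{3}$. Writing $F(z) := \sum_{k=1}^{n} |f_{k}(z)|^{2}$, I would first regularize $F \mapsto F + \epsilon$ so that one may assume $F \geq \epsilon > 0$ on $\D$, keeping all bounds below independent of $\epsilon$. Setting $\varphi_{j} := \bar{f}_{j}/F$ gives the Koszul identity $\sum_{j} \varphi_{j} f_{j} \equiv 1$. The smooth candidate $h^{3}\varphi_{j}$ satisfies $\sum_{j} (h^{3}\varphi_{j}) f_{j} = h^{3}$ but is not analytic, so I would seek an antisymmetric correction $\beta_{jk} = -\beta_{kj}$ and set
\[
g_{j} := h^{3} \varphi_{j} + \sum_{k=1}^{n} f_{k} \beta_{jk}.
\]
Antisymmetry preserves $\sum_{j} g_{j} f_{j} = h^{3}$, and the condition $\bar{\partial} g_{j} = 0$ reduces, using $\sum_{k} f_{k} \bar{\partial}\varphi_{k} = 0$ and $\sum_{k} \varphi_{k} f_{k} = 1$, to the antisymmetric $\bar{\partial}$-system
\[
\bar{\partial}\beta_{jk} = h^{3}\bigl(\varphi_{j}\bar{\partial}\varphi_{k} - \varphi_{k}\bar{\partial}\varphi_{j}\bigr).
\]

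The second step invokes the Carleson--Jones $\bar{\partial}$-solution theorem: if $|\mu(z)|^{2}(1-|z|^{2})\,dA(z)$ is a Carleson measure, then $\bar{\partial}\beta = \mu$ admits a bounded solution on $\D$ whose $L^{\infty}$-norm is controlled by the square root of the Carleson norm. Granting the Carleson estimate for the right-hand side of the $\bar{\partial}$-system, I obtain bounded antisymmetric $\beta_{jk}$; since the hypothesis $|h|^{2} \leq F$ also yields $|h^{3}\varphi_{j}| = |h|^{3}|f_{j}|/F \leq |h|^{3}/\sqrt{F} \leq |h|^{2}$, each $g_{j}$ lies in $L^{\infty}(\D) \cap \ker \bar{\partial} = H^{\infty}(\D)$. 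A normal-families argument then lets me send $\epsilon \to 0$ to conclude.

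The main obstacle, as always in Wolff-type arguments, is the Carleson estimate. A direct computation yields
\[
\bar{\partial}\varphi_{k} = \frac{\overline{f_{k}'}}{F} - \frac{\bar{f}_{k}\sum_{l} f_{l}\overline{f_{l}'}}{F^{2}},
\]
and Cauchy--Schwarz gives $|\bar{\partial}\varphi_{k}| \leq 2 (\sum_{l} |f_{l}'|^{2})^{1/2}/F$. Combining this with $|\varphi_{j}|^{2} \leq 1/F$ and crucially with $|h|^{6} \leq F^{3}$, one obtains the clean pointwise bound
\[
\bigl| h^{3}\bigl(\varphi_{j}\bar{\partial}\varphi_{k} - \varphi_{k}\bar{\partial}\varphi_{j}\bigr) \bigr|^{2} \leq C \sum_{l=1}^{n} |f_{l}'(z)|^{2}
\]
with $C$ independent of $\epsilon$. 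Since each $f_{l} \in H^{\infty} \subset BMOA$, the classical fact that $|f_{l}'(z)|^{2}(1-|z|^{2})\,dA(z)$ is a Carleson measure with constant controlled by $\|f_{l}\|_{\infty}^{2}$ supplies the bound needed and closes the argument; the cubic power $h^{3}$ is precisely what is required to absorb the three accumulated factors of $1/\sqrt{F}$.
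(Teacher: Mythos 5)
The paper itself offers no proof of Theorem A; it is quoted from Wolff via Garnett (Theorem 2.3, p.~329), so there is no in-paper argument to compare against. Your proposal is essentially the standard Wolff--Garnett proof: the Koszul/antisymmetric correction $g_j=h^3\varphi_j+\sum_kf_k\beta_{jk}$, the reduction to the antisymmetric $\bar\partial$-system, the pointwise bound $|h^3(\varphi_j\bar\partial\varphi_k-\varphi_k\bar\partial\varphi_j)|^2\le C\sum_l|f_l'(z)|^2$ (which is exactly where the exponent $3$ absorbs the three factors of $F^{-1/2}$), the Littlewood--Paley fact that $\sum_l|f_l'(z)|^2(1-|z|^2)\,dA(z)$ is a Carleson measure with constant controlled by $\sup_z\sum_l|f_l(z)|^2$, and a bounded solution of $\bar\partial\beta=\mu$. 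These steps are all correct; in particular your ``square-root'' form of the $\bar\partial$-lemma does follow from Jones's theorem by Cauchy--Schwarz over Carleson boxes, since $\int_Q|\mu|(1-|z|^2)\,dA\le N^{1/2}\ell(Q)^2$ when $\int_Q|\mu|^2(1-|z|^2)\,dA\le N\ell(Q)$.

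The one step that fails as written is the regularization $F\mapsto F+\epsilon$. With $\varphi_j=\bar f_j/(F+\epsilon)$ you lose the partition of unity: $\sum_k\varphi_kf_k=F/(F+\epsilon)$, so $\sum_jg_jf_j=h^3F/(F+\epsilon)\ne h^3$, and, worse, the cancellation $\sum_kf_k\bar\partial\varphi_k=0$ on which the computation $\bar\partial g_j=0$ rests also fails (one gets $\bar\partial g_j=h^3[(1-s)\bar\partial\varphi_j+\varphi_j\bar\partial s]$ with $s=F/(F+\epsilon)$), so your corrected $g_j$ are neither exact solutions nor analytic. The standard repairs are: (a) adjoin the constant function $f_{n+1}\equiv\epsilon$ as an extra generator, so that $F_\epsilon=F+\epsilon^2$ is bounded below \emph{and} $\sum_{k=1}^{n+1}\varphi_kf_k=1$ exactly; solve with uniform bounds and let $\epsilon\to0$ by normal families, noting that the extra term $\epsilon g_{n+1}\to0$; or (b) pass to the dilates $f_j(rz)$, observe that the common zero set in $\overline{\D}$ is then finite and that $|h|\le F^{1/2}$ forces $h$ (hence $h^3\varphi_j$, which is $O(F)$) to vanish there, and check that this discrete set is removable for the bounded $\bar\partial$-data. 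Everything else in your outline goes through once this is repaired.
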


Further research has revealed conditions under which $ h $ itself may be contained within the ideal. For $ f_j \in H^{\infty}(\D), j=1, 2, \dots $, let $ F(z) = (f_1(z) , f_2(z) , \dots ) $, and let $ F(z)^{\ast} $ denote the adjoint of $ F(z) $ for $ z $ fixed. If, for $ h \in \D $ and $ p > 1 $, we assume
\begin{equation*}
1 \geq [F(z)F(z)^{\ast}]^p \geq | h(z) | \text{ for all } z \in \D \text{,}
\end{equation*}
then we obtain $ h \in \mathcal{I} ( \{ f_j\}_{j=1}^n) $. This was shown by Cegrell \cite{cegrell1} for finitely many functions $ f_j $, and extended to infinitely many functions by Trent \cite{estimate}. It was known that the result fails if $ p < 1 $ for some years before Treil proved that it also fails if $ p = 1 $ \cite{treil2}.

Can this estimate be improved upon? More precisely, can we find a non-decreasing function $ \psi $ such that 
\begin{equation*}
1 \geq F(z)F(z)^{\ast} \psi(F(z)F(z)^{\ast}) \geq |h(z)| \text{ for all } z \in \D
\end{equation*}
implies $ h \in \mathcal{I} ( \{ f_j\}_{j=1}^n) $?  Many authors, independently, have considered this question, including  Cegrell \cite{cegrell1}, Pau \cite{pau}, Trent \cite{estimate}, and Treil \cite{T3}. It is Treil who has given the best known sufficient condition for ideal membership.

We let $ H_{l^2}^{\infty}(\D) $ denote the Hilbert space of bounded, analytic functions that map $ \D $ to $ l^2 $. That is, a vector $ f $ in $ H_{l^2}^{\infty}(\D $ is an infinituple consisting of functions $ f_i \in H^{\infty}(\D) $ such that 
\begin{equation*}
\| f \|^2 = \sum_{i = 1}^{\infty} \sup_{z \in \D} |f_i(z)| < \infty \text{.}
\end{equation*}
We now give Treil's Theorem as follows:

\begin{tr}

Let $ F(z)= (f_1(z), f_2(z),...),  \; f_j  \in H^{\infty}(\mathbb{D})$, $F(z) F(z)^{\ast} \leq 1  \text{ for all } \; z\in \D$,  and $ h\in H^{\infty}(\mathbb{D})$ such that 
$$  F(z)F(z)^{\ast}\;  \psi \left( F(z)F(z)^{\ast}\right) \geq |h(z)|  \text{ for all }  z\in \mathbb{D},$$ where $\psi:[0,1]\rightarrow [0,1]$ is a non-decreasing  function such that $\int _{0}^{1} \frac{\psi(t)}{t} dt <\infty.$ Then there exists $G \in H_{l^2}^{\infty}(\D)$ such that $$F(z)G(z)^{T}= h(z),\;\; \text{for all} \;\; z\in \D \text{.}$$
\end{tr}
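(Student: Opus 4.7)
The plan is to construct $G$ by the Koszul--Wolff correction method: first produce an explicit non-analytic solution, then modify it by an analytic skew-symmetric term that leaves the equation invariant. Set, on $\{FF^{\ast} > 0\}$,
\[
\phi_i(z) \;=\; \frac{h(z)\,\overline{f_i(z)}}{F(z)F(z)^{\ast}}, \qquad i=1,2,\ldots,
\]
so that $\sum_i f_i \phi_i \equiv h$ and $\sum_i |\phi_i|^2 = |h|^2 / FF^{\ast} \le |h| \le 1$, placing $\phi \in L^{\infty}_{l^2}(\D)$. Since $\phi_i$ is not holomorphic, I would seek a true solution of the form $g_i = \phi_i + \sum_j b_{ij} f_j$ with $b_{ji} = -b_{ij}$. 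Antisymmetry forces $\sum_i f_i \sum_j b_{ij} f_j \equiv 0$ automatically, so the identity $\sum_i f_i g_i = h$ is preserved, and the requirement $\bar\partial g_i = 0$ becomes a system of $\bar\partial$-equations for the matrix $(b_{ij})$.

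A direct computation using $\bar\partial(FF^{\ast}) = \sum_k f_k\,\overline{f_k'}$ shows that the skew-symmetric Ansatz
\[
\bar\partial b_{ij} \;=\; \frac{h\,\bigl(\overline{f_i}\,\overline{f_j'} - \overline{f_j}\,\overline{f_i'}\bigr)}{(FF^{\ast})^2}
\]
satisfies $\sum_j f_j\,\bar\partial b_{ij} = -\bar\partial \phi_i$, so all analyticity constraints hold simultaneously. The remaining task is to solve these scalar $\bar\partial$-equations on $\D$ so that the Hilbert--Schmidt sum $\bigl(\sum_{i,j}|b_{ij}(z)|^2\bigr)^{1/2}$ is uniformly bounded on $\D$; then $G = \phi + BF$, where $B=(b_{ij})$, lies in $H^{\infty}_{l^2}(\D)$ and solves $FG^T = h$.

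By a Jones-type $L^{\infty}$-solvability for $\bar\partial$ applied entrywise and summed in Hilbert--Schmidt norm (equivalently, an $H^1$--$\mathrm{BMO}$ pairing executed in $l^2$-valued form), such a uniform bound reduces to showing that the measure
\[
d\mu(z) \;=\; \frac{|h(z)|^2\,\bigl(FF^{\ast}\cdot|\partial F|^2 - |\langle F,\partial F\rangle|^2\bigr)}{(FF^{\ast})^4}\,(1-|z|^2)\,dA(z)
\]
is a Carleson measure on $\D$ with controlled norm, where the Gram-type numerator comes from Lagrange's identity applied to the skew tensor $F \wedge \partial F$. The hypothesis $|h|^2 \le (FF^{\ast})^2 \psi(FF^{\ast})^2$ dominates this integrand by a constant times $\psi(FF^{\ast})^2\,|\partial F|^2 / FF^{\ast}$, and its Carleson property would be verified by a layer-cake decomposition of the range of $FF^{\ast}$ combined with the Littlewood--Paley estimate $\int_{T(I)} |\partial F|^2(1-|z|^2)\,dA \le C\|F\|_{\infty}^2|I|$ on Carleson boxes $T(I)$. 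The integrability condition $\int_0^1 \psi(t)/t\,dt < \infty$ enters exactly as the summability that makes the level-set contributions converge to a finite Carleson norm.

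The main obstacle I expect is controlling the $\bar\partial$-solution in Hilbert--Schmidt norm uniformly in the (possibly infinite) index set: the scalar Jones estimate must be propagated through an $l^2$-valued Carleson embedding, and the delicate bookkeeping required to see that the sharp logarithmic estimate coming from $\int_0^1 \psi(t)/t\,dt$ survives the operator-valued bounds, while depending on the hypotheses only through $\|F\|_{\infty}$, $\|h\|_{\infty}$, and this integral, is where the technical weight of the argument resides.
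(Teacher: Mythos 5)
The first thing to note is that the paper does not prove this statement at all: it is quoted as Theorem B from Treil \cite{T3} and used throughout as a black box (see the Remark following Theorem 1.1), so there is no internal proof to compare yours against. Your proposal reproduces the standard Koszul--Wolff architecture --- the non-analytic solution $\phi_i = h\overline{f_i}/(FF^{\ast})$, an antisymmetric $\bar\partial$-correction $b_{ij}$, and a reduction to an $l^2$-valued Carleson measure estimate --- which is indeed the general framework in which Treil's theorem lives.

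As a proof, however, it has a genuine gap exactly at the step you dispose of with ``a layer-cake decomposition \dots combined with the Littlewood--Paley estimate.'' The hypothesis gives the pointwise domination $d\mu \lesssim \frac{\psi(FF^{\ast})}{FF^{\ast}}\,|\partial F|^2(1-|z|^2)\,dA$ (or your variant with $\psi^2$), and the factor $\psi(t)/t$ is unbounded as $t\to 0$ for admissible $\psi$ (e.g.\ $\psi(t)=(\log t^{-1})^{-1-\epsilon}$), so the plain Littlewood--Paley bound $\int_{T(I)}|\partial F|^2(1-|z|^2)\,dA \lesssim \|F\|_\infty^2 |I|$ on Carleson boxes cannot absorb it. For the level-set decomposition to produce a finite Carleson norm you would need to know that the part of the Littlewood--Paley measure supported on $\{FF^{\ast}\le t\}$ has Carleson norm decaying essentially linearly in $t$, and that is not a consequence of the boundedness of $F$; it is the heart of the matter and is precisely where the condition $\int_0^1 \psi(t)t^{-1}\,dt<\infty$ must be spent. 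Treil gets this by a Green's formula/subharmonicity argument: he manufactures an auxiliary function $\Phi$ from $\psi$ with $\|\Phi\|_\infty \lesssim \int_0^1 \psi(t)t^{-1}\,dt$ such that $\Delta\bigl(\Phi(FF^{\ast})\bigr)$ dominates the relevant integrand, which converts the integral hypothesis directly into a Carleson norm bound. Without a substitute for that step your argument does not close; the antisymmetric Ansatz and the $l^2$-valued Jones-type solution of $\bar\partial$ are routine by comparison, and your observation that the constant depends only on $\psi$ (through the integral) is consistent with what the present paper needs from Treil, namely the existence of the constant $C_0$.
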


For an example of such a function $ \psi $ one can consider a function defined near 0 by
\begin{equation*}
\psi(t) = \frac{1}{(\ln t^{-2})(\ln_2 t^{-2}) \dots (\ln_n t^{-2})(\ln_{n+1} t^{-2})^{1+\epsilon}} \text{,}
\end{equation*}
  where $\ln_k(t)=\underbrace {\ln \ln ... \ln}_\text{ k times}(t)$  and $ \epsilon > 0.$ \\

For our paper, we consider three types of subalgebras of $H^{\infty}(\D)$. We use the fact that both the Corona Theorem and 
Wolff's Theorem hold on $H^{\infty}(\D)$ to find solutions contained within the given subalgebras. 

The first type is the
collection of subalgebras of the form $$ \C + BH^{\infty}(\D) = \lbrace \alpha + Bg : \alpha \in \C, g
\in H^{\infty}(\D) \rbrace, $$ where $ B $ is a fixed Blaschke product. This algebra was introduced and function problems were considered in J. Solasso \cite{sola}, M. Ragupathi \cite{pathi}, and Davidson, Paulsen, Ragupathi, and Singh \cite{DPRS}. In \cite{MSW}, Mortini, Sasane, and Wick proved the Corona Theorem for a finite number of generators, whereas the infinite version is due to Ryle and Trent \cite{{ryle},{ryle2}}. Just as in the $ H^{\infty} $ case, condition \eqref{eq:wolff} is not sufficient to guarantee ideal membership of the function $ h $ in the algebra $ \C + BH^{\infty}(\D) $, as can be shown by simple modification of  Rao's counterexample \cite{rao}.  Given $p < 2$ and a Blaschke product $B$ there exist functions $f, f_{1}, f_{2} \in BH^{\infty}$ such that they satisfy (2), but the equation $$ f_{1} g_{1} + f_{2}g_{2}=f $$ does not have a solution in $ BH^{\infty}$. 
  Note that in this example, $f_{1}(z)=f_2(z)=f(z)=0$ on the zero set $Z(B)$ of B. Since  the equation $f_{1} g_{1} + f_{2}g_{2}=f $ does not have a solution in $ BH^{\infty}$, obviously, it does not have a solution in $ \C + BH^{\infty}(\D) $.

\begin{thm} \label{CBH} 
Let $ F(z) = (f_1(z), f_2(z), \dots),  \; f_j \in \C + BH^{\infty}(\D),  F(z)F(z)^{\ast}\leq 1 \; \text{ for all } \; z\in \D $ and $
h \in \C + BH^{\infty}(\D) $, with $$ F(z)F(z)^{\ast}\;  \psi \left(F(z)F(z)^{\ast}\right)  \geq |h(z)| \; \text{ for all } \; z\in \D , $$ where $\psi$ is a function given as in Theorem B. Then there exists $ V = (v_1(z), v_2(z), \dots ), \; v_{j} \in \C + BH^{\infty}(\D)$ such that $$ F(z)V(z)^T = h(z) \; \text{ for all}\;  z \in \D.  $$

The solution vector $ V(z) $ is bounded as follows:
\begin{itemize}
\item[] $ \|V(z)\|_{l^2} \leq (1 + \frac{1}{\|F(\alpha)\|}_{l^2})C_0 $ if $ F(\alpha) \neq 0 $, and
\item[] $ \|V(z)\|_{l^2} \leq C_0 $ if $ F(\alpha) = 0 $,
\end{itemize} 
where $ \alpha $ is a zero of $ B(z) $ and  $ C_0 $ is the norm of the $ H^{\infty} $ solution obtained in \cite{T3}.
\end{thm}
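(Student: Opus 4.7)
The plan is to apply Treil's Theorem (Theorem B) to produce an $H^\infty(\D)$ solution and then modify it so that every component lands inside $\C+BH^\infty(\D)$. The essential structural observation is that $g\in H^\infty(\D)$ belongs to $\C+BH^\infty(\D)$ exactly when $g$ takes a common constant value at every zero of $B$. Writing $f_j=c_j+B\phi_j$ and $h=c_h+B\varphi$ with $c_j,c_h\in\C$ and $\phi_j,\varphi\in H^\infty(\D)$, the values $f_j(\alpha)=c_j$ and $h(\alpha)=c_h$ are independent of the choice of $\alpha\in Z(B)$; in particular $F(\alpha)$ is either zero at every $\alpha\in Z(B)$ or nonzero at every $\alpha\in Z(B)$, which matches the dichotomy in the stated bound.

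In the case $F(\alpha)\neq 0$, set $c=F(\alpha)=(c_1,c_2,\dots)$ and choose the constants
\[
a_j=\frac{c_h\,\overline{c_j}}{\|c\|_{l^2}^{2}},\qquad j=1,2,\dots,
\]
so that $\sum c_j a_j=c_h$ and $\|a\|_{l^2}=|c_h|/\|c\|_{l^2}\le 1/\|F(\alpha)\|_{l^2}$, the last inequality using $|c_h|\le FF^*(\alpha)\psi(FF^*(\alpha))\le 1$. Since $h-\sum a_j f_j\in\C+BH^\infty(\D)$ has vanishing constant part, it lies in $BH^\infty(\D)$, and
\[
\widetilde h:=\frac{h-\sum a_j f_j}{B}=\varphi-\sum a_j\phi_j\in H^\infty(\D).
\]
Seeking a solution of the form $v_j=a_j+B w_j$ forces $v_j\in\C+BH^\infty(\D)$ and reduces $FV^T=h$ to the equation $FW^T=\widetilde h$ for $W=(w_j)\in H_{l^2}^\infty(\D)$.

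If Treil's Theorem can be applied a second time to this reduced equation it produces $W$ with $\|W\|_{l^2}\le C_0$, and then $V=(a_j+B w_j)_j$ solves $FV^T=h$ with components in $\C+BH^\infty(\D)$ and pointwise bound
\[
\|V(z)\|_{l^2}\le\|a\|_{l^2}+|B(z)|\,\|W(z)\|_{l^2}\le\frac{1}{\|F(\alpha)\|_{l^2}}+C_0\le\Bigl(1+\frac{1}{\|F(\alpha)\|_{l^2}}\Bigr)C_0,
\]
the last step using $C_0\ge 1$. The case $F(\alpha)=0$ is handled in parallel: the pointwise hypothesis at $\alpha$ forces $c_h=0$, so $h\in BH^\infty(\D)$, and the same reduction with $a_j=0$ yields $v_j=B w_j$ and the cleaner bound $\|V\|_{l^2}\le C_0$.

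The main obstacle is the second application of Treil's Theorem, namely verifying $FF^*\psi(FF^*)\ge|\widetilde h|$ pointwise on $\D$. The direct estimate
\[
|B|\,|\widetilde h|=\bigl|h-\sum a_j f_j\bigr|\le FF^*\psi(FF^*)+\|a\|_{l^2}\,(FF^*)^{1/2}
\]
carries a factor of $|B|$ on the left that cannot be divided out uniformly near $Z(B)$, and the additive error $\|a\|_{l^2}(FF^*)^{1/2}$ corresponds to a $\psi$ which fails the integrability condition $\int_0^1\psi(t)/t\,dt<\infty$. I anticipate the resolution will exploit the explicit formula $\widetilde h=\varphi-\sum a_j\phi_j$, the structural relations between the pairs $(F,h)$ and $(\Phi,\varphi)$ with $\Phi=(\phi_1,\phi_2,\dots)$, and the monotonicity of $\psi$ to derive a pointwise bound on $\widetilde h$ admissible in Treil's Theorem, possibly after a controlled modification of $\psi$ tailored to the specific form of $\widetilde h$.
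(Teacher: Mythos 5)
Your reduction to $FW^{T}=\widetilde h$ with $\widetilde h=(h-\sum_j a_jf_j)/B$ is correct as algebra, but the step you flag as ``the main obstacle'' is a genuine gap, and it does not close. There is no reason why $\widetilde h$ should satisfy $F(z)F(z)^{\ast}\,\psi(F(z)F(z)^{\ast})\ge|\widetilde h(z)|$ for the given $\psi$, nor for any admissible $\psi$: dividing $h-\sum a_jf_j$ by $B$ can only increase modulus (since $|B|\le 1$), and near the boundary, where $|B|\to 1$ gives no help, the subtraction of $\sum a_jf_j$ introduces an error of size $\|a\|_{l^2}(FF^{\ast})^{1/2}$, which as you note corresponds to $\psi(t)=t^{-1/2}\|a\|$ and violates $\int_0^1\psi(t)/t\,dt<\infty$. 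Worse, it is not even clear a priori that $\widetilde h$ lies in the ideal generated by the $f_j$ in $H^{\infty}(\D)$ (one only gets $B\widetilde h=F(G-a)^{T}$ from Treil's solution $G$), so no corona-type theorem applies to the pair $(F,\widetilde h)$. The same problem recurs in your $F(\alpha)=0$ case: there you need $FF^{\ast}\psi(FF^{\ast})\ge|h/B|$, which is \emph{stronger} than the hypothesis, not weaker.

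The paper's proof avoids ever posing a second corona problem for the pair $(F,\cdot)$ in the case $F(\alpha)\ne 0$. It takes Treil's solution $G$ and corrects it by an element of $\ker F$, using the Koszul-complex identity \eqref{relation}: multiplying $(FG^{T})I=G^{T}F+Q_{F}Q_{G}^{T}$ on the right by $F_c^{\ast}/\|F_c\|^{2}$ gives
\begin{equation*}
V^{T}:=G^{T}+Q_{F(z)}Q_{G(z)}^{T}\tfrac{F_c^{\ast}}{\|F_c\|^{2}}=\tfrac{h_c}{\|F_c\|^{2}}F_c^{\ast}+B(z)\bigl(h_B(z)-G(z)^{T}F_B(z)\bigr)\tfrac{F_c^{\ast}}{\|F_c\|^{2}}\text{,}
\end{equation*}
where the right-hand expression exhibits $V$ as an element of $(\C+BH^{\infty}(\D))_{l^2}$ and the left-hand expression shows $FV^{T}=FG^{T}=h$ because $F(z)Q_{F(z)}=0$. (Incidentally, the constant part of this $V$ is exactly your vector $a$, so your unknown $W=(V-a)/B$ does exist --- but only as an output of this construction, not as an input to Treil's theorem.) In the case $F(\alpha)=0$ the paper does apply Treil a second time, but to the \emph{deflated} pair $(F_B,h_{B_1})$ with $F=BF_B$ and $h=B^{2}h_{B_1}$: dividing the hypothesis by $|B|^{2}$ and using monotonicity of $\psi$ gives $F_BF_B^{\ast}\psi(F_BF_B^{\ast})\ge|h_{B_1}|$, which is a legitimate Treil hypothesis, and then $V=BG_1$ solves the problem with $\|V\|\le\|G_1\|_{\infty}\le C_0$. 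You would need to adopt one of these two mechanisms (the $Q$-operator correction, or deflation by $B$ on both $F$ and $h$) to complete the argument.
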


\begin{rem} In Theorem \ref{CBH}, we use Treil's \cite{T3} $ H^{\infty} $ solution $V$ of $FV^T=h$ to find the solution of $FV^T=h$ in $\mathbb{C}+BH^{\infty}(\D)$. While it is  not stated directly in \cite{T3}, one can see it from the proof there that, in Treil's theorem, the solution can be estimated by a constant $C_0$ depending on the function $\psi$.
\end{rem}
\begin{cor} \label{CCBH} Let $ F= (f_1, f_2,  \dots) ,  \;  f_{j} \in \C + BH^{\infty}(\D)$ and $
h \in \C + BH^{\infty}(\D) $, with $$ 1 \geq [F(z)F(z)^{\ast}]^{\frac{1}{2}} \geq  |h(z)| \; \text{ for all } \; z\in \D .$$ Then there exists $ V= (v_1, v_2, \dots ), \; v_{j} \in \C + BH^{\infty}(\D)$ such that
\begin{equation*}
 F(z)V(z)^T = h^3(z) \;\;  \text{ for all } \; z\in \D  \text{.}
\end{equation*}

The solution vector $ V(z) $ is bounded by a constant $C_1$ as in Theorem \ref{CBH}, where $C_1$ is the estimate of the $H^{\infty}$
solution obtained in  \cite{estimate}.
\end{cor}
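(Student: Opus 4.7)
The plan is to reduce this corollary to a direct application of Theorem \ref{CBH}, with $h^3$ playing the role of $h$ and with a specific, carefully chosen $\psi$. First I would observe that $\C+BH^{\infty}(\D)$ is closed under multiplication, since for $a+Bg,\;c+Bk$ in the algebra one has $(a+Bg)(c+Bk)=ac+B(ak+cg+Bgk)\in \C+BH^{\infty}(\D)$. Consequently $h^{3}\in\C+BH^{\infty}(\D)$, so it is a legitimate target function for Theorem \ref{CBH}.

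Next, I would convert the Wolff-type hypothesis $[F(z)F(z)^{\ast}]^{1/2}\geq |h(z)|$ into the Treil-type hypothesis required by Theorem \ref{CBH}. Squaring gives $F(z)F(z)^{\ast}\geq |h(z)|^{2}$, and multiplying by $|h(z)|\leq [F(z)F(z)^{\ast}]^{1/2}$ yields
\[
|h^{3}(z)|=|h(z)|^{3}\leq F(z)F(z)^{\ast}\cdot[F(z)F(z)^{\ast}]^{1/2} \quad\text{for all } z\in\D.
\]
Hence the choice $\psi(t)=t^{1/2}$ satisfies
\[
F(z)F(z)^{\ast}\,\psi\bigl(F(z)F(z)^{\ast}\bigr)\geq |h^{3}(z)|,\qquad z\in\D.
\]
This $\psi$ is non-decreasing, maps $[0,1]$ into $[0,1]$, and obeys the Dini-type integrability condition $\int_{0}^{1}\psi(t)/t\,dt=\int_{0}^{1}t^{-1/2}\,dt=2<\infty$ required in Theorem B, and therefore in Theorem \ref{CBH}.

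With these ingredients in place, I would simply invoke Theorem \ref{CBH} applied to $F$ and to the function $h^{3}\in\C+BH^{\infty}(\D)$. This produces a vector $V=(v_{1},v_{2},\dots)$ with each $v_{j}\in\C+BH^{\infty}(\D)$ satisfying $F(z)V(z)^{T}=h^{3}(z)$ on $\D$, which is precisely the conclusion of Corollary \ref{CCBH}. For the norm estimate, I would note that with $\psi(t)=t^{1/2}$ the underlying $H^{\infty}$ solution is exactly the Cegrell--Trent solution from \cite{estimate} (rather than the more delicate construction of \cite{T3}); inserting the bound $C_{1}$ from \cite{estimate} into the two-case estimate of Theorem \ref{CBH} yields the asserted bound on $V(z)$.

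I do not expect any genuine obstacle in this argument; the work is essentially bookkeeping. The only point that requires a brief check is that raising $h$ to the third power interacts correctly with $\psi(t)=t^{1/2}$ to yield the Treil-type inequality, and that $h^{3}$ remains in the subalgebra $\C+BH^{\infty}(\D)$. Both were verified above, so the corollary follows at once from Theorem \ref{CBH}.
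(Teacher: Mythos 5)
Your proposal is correct and matches the paper's own argument in substance: the paper likewise treats $h^3$ as the target function, obtains the $H^{\infty}$ solution via Wolff's Theorem (equivalently, the estimate of \cite{estimate} with $\psi(t)=t^{1/2}$), and then runs the construction of Theorem \ref{CBH}; your observation that $[F(z)F(z)^{\ast}]^{1/2}\geq|h(z)|$ yields $F(z)F(z)^{\ast}\,\psi(F(z)F(z)^{\ast})\geq|h^3(z)|$ with $\psi(t)=t^{1/2}$, together with the check that $h^3$ stays in the algebra and that this $\psi$ satisfies the Dini condition, is exactly the point the paper relies on. Your phrasing as a black-box application of Theorem \ref{CBH} (with the $H^{\infty}$ solver swapped for the one in \cite{estimate} to get the constant $C_1$) is a slightly cleaner packaging of the same proof.
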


For the second type of subalgebra, let $ K \subset \Z_{+} $ and define $$ H_K^{\infty}(\D) = \lbrace f
\in H^{\infty}(\D) : f^{(j)}(0) = 0 \; \text{ for all } \;   j \in K \rbrace. $$ We consider those sets $ K $ for which $
H_K^{\infty}(\D) $ is an algebra under the usual product of functions. Obviously, not every set $ K
$ defines an algebra; for example, let $ K = \lbrace 2 \rbrace $. Though there is not a complete characterization of  the set $K$ for which $H_K^{\infty}(\D)$ is an algebra, Ryle and Trent \cite{ryle} have given certain criteria that the set $K$ must meet. In Lemma 2.1, we will state some of these criteria.
  For our purposes we assume $ K $
is finite. (We justify this assumption in the next section.)\\

We define algebras comprised of vectors with entries in $ H_K^{\infty}(\D) $ as follows:
\begin{align*} 
\mathcal{H}_{K, n}^{\infty}(\D) = \lbrace \lbrace f_j \rbrace_{j=1}^{n} : f_j
\in H_{K}^{\infty}(\D) \text{ for } j = 1, 2, \dots , n \\ 
\text{ and } \sup_{z \in \D} \sum_{j=1}^{n} \|f_j(z) \|^2 < \infty \rbrace. 
\end{align*} 
Multiplication here is entrywise, and
$n$ can be either a positive integer or $ \infty $. We write the elements of $\mathcal{H}_{K,
n}^{\infty}(\D)$ as row vectors, so that   $F\in \mathcal{H}_{K, n}^{\infty}(\D).$ 

Theorem 1.2 and Corollary 1.2 are analogues of Theorem 1.1 and Corollary 1.1, respectively, in this algebra. However, we need the additional assumption that $ F(0) \neq \mathbf{0} $.
\begin{thm} \label{HK1} 
Let $ F= (f_1, f_2, \dots)  \in \mathcal{H}_{K, n}^{\infty}(\D) $
and $ h \in H_K^{\infty}(\D) $, with $ 1 \geq F(z)F(z)^{\ast}\psi(F(z)F(z)^{\ast}) \geq |h(z)| \all z \in \D
$. Suppose also that $ F(0) \neq \mathbf{0} $. Then there exists $  V = (v_1, v_2, \dots )
\in \mathcal{H}_{K, n}^{\infty}(\D) $ such that 
\begin{equation*}
 F(z)V(z)^T = h(z) \all z \in \D
\end{equation*}
and
\begin{equation*}
\|V(z)\|_{l^2} \leq C_0 + \frac{\|G^{(k_p)}(0)\|_{l^2}}{{k_p}! \|F(0)\|_{l^2}} \text{.}
\end{equation*}
Here $ k_p $ is the largest element of $ K $, and $ G $ is an $ H^{\infty} $ solution obtained as in \cite{T3}.
\end{thm}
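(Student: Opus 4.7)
The plan is to start from Treil's $H^{\infty}$ solution supplied by Theorem B and lift it into $\mathcal{H}_{K, n}^{\infty}(\D)$ while preserving the identity $F V^T = h$.

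First, apply Theorem B to the pair $F, h$ regarded as $H^{\infty}(\D)$-data: this produces $G = (g_1, g_2, \dots) \in H_{l^2}^{\infty}(\D)$ with $F G^T = h$ and $\sup_{z \in \D} \|G(z)\|_{l^2} \leq C_0$. A priori, the components $g_j$ lie only in $H^{\infty}(\D)$, so the Taylor coefficients $g_j^{(k)}(0)$ need not vanish for $k \in K$. Split each component as $g_j = \widetilde g_j + R_j$ with
\[
R_j(z) = \sum_{k \in K} \frac{g_j^{(k)}(0)}{k!}\, z^k, \qquad \widetilde g_j = g_j - R_j \in H_K^{\infty}(\D).
\]
Then $F \widetilde G^T = h - E$ with $E := F R^T$; since $h \in H_K^{\infty}(\D)$ and $F \widetilde G^T \in H_K^{\infty}(\D)$ (by the algebra property coming from the Ryle--Trent criterion, Lemma 2.1), the residue $E$ also lies in $H_K^{\infty}(\D)$. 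The theorem is thereby reduced to solving $F U^T = E$ with $U \in \mathcal{H}_{K, n}^{\infty}(\D)$; then $V := \widetilde G + U$ will satisfy $F V^T = h$ and lie in $\mathcal{H}_{K, n}^{\infty}(\D)$.

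To construct $U$, I exploit the hypothesis $F(0) \neq \mathbf{0}$. Set $\beta = \overline{F(0)}^T / \|F(0)\|_{l^2}^2 \in \ell^2$, a constant vector (and so an element of $H_K^{\infty}$ since necessarily $0 \notin K$) satisfying $F(0)\beta = 1$ and $\|\beta\|_{l^2} = 1/\|F(0)\|_{l^2}$. Working inductively through $K = \{k_1 < \cdots < k_p\}$, at each stage I add a correction of the form $z^{k_j}\cdot(\text{vector built from } \beta \text{ and the lower-order Taylor data of } G)$ which kills the next offending Taylor coefficient of the current residue while preserving membership in $\mathcal{H}_{K, n}^{\infty}(\D)$. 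The closure of $\Z_+ \setminus K$ under addition is precisely what ensures each such correction stays in the subalgebra. Summing the contributions, the top stage yields a new term of size $\|G^{(k_p)}(0)\|_{l^2}/(k_p!\,\|F(0)\|_{l^2})$, while the lower-order stages are absorbed into reapplications of Theorem B and contribute only to the $C_0$ term, giving the stated bound on $\|V(z)\|_{l^2}$.

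The main obstacle is the inductive construction of $U$: one must simultaneously enforce $F U^T \equiv E$ globally on $\D$ and $u_j^{(k)}(0) = 0$ pointwise for every $j$ and every $k \in K$. The key consistency input comes from differentiating $F G^T = h$ at $z = 0$; using $h^{(k)}(0) = 0$ for $k \in K$ and $F^{(i)}(0) = 0$ for $i \in K$, the identities
\[
\sum_{\substack{0 \leq i \leq k \\ i \notin K}} \binom{k}{i}\, F^{(i)}(0)\, G^{(k-i)}(0)^T = 0 \quad (k \in K)
\]
show that the corrections required at each inductive stage lie in the range of the linear map $\xi \mapsto F(0)\xi$; the hypothesis $F(0) \neq \mathbf{0}$ makes this map surjective onto $\C$, enabling the construction to close.
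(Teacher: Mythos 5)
Your reduction step is fine as far as it goes: splitting $g_j = \widetilde g_j + R_j$ and observing that $E = FR^T = h - F\widetilde G^T \in H_K^{\infty}(\D)$ is correct. The proof breaks down at the construction of $U$. First, a correction term $z^{k_j}\cdot(\text{vector})$ with $k_j \in K$ is, by the very definition of $H_K^{\infty}(\D)$, \emph{not} in $H_K^{\infty}(\D)$ unless the vector is zero (the $k_j$-th derivative at $0$ of $z^{k_j}\xi$ is $k_j!\,\xi$); consequently $V = \widetilde G + U$ has $V^{(k_1)}(0) \neq \mathbf{0}$ and fails to lie in $\mathcal{H}_{K,n}^{\infty}(\D)$ --- the very membership you are trying to arrange. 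Second, matching finitely many Taylor coefficients at $0$ cannot enforce the global identity $FU^T \equiv E$ on all of $\D$, and you cannot ``reapply Theorem B'' to the pair $(F,E)$: since $E = FR^T$, Cauchy--Schwarz gives only $|E(z)| \leq (F(z)F(z)^{\ast})^{1/2}\|R\|_{\infty}$, which is far weaker than the hypothesis $|E(z)| \leq F(z)F(z)^{\ast}\psi(F(z)F(z)^{\ast})$ that Theorem B requires. So the equation $FU^T = E$ is itself an unsolved ideal problem in the subalgebra, and your scheme provides no mechanism to solve it.

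The missing idea is to correct $G$ by an element of the \emph{pointwise kernel} of multiplication by $F(z)$, so that $FV^T = h$ is preserved automatically and one is free to tune the correction to kill the offending Taylor coefficient. The paper does this with the Koszul operator: $V^T = G^T - Q_{F(z)}X(z)^T$ with $X(z)^T = \frac{1}{k_p!}\frac{Q_{F(0)}^{\ast}G^{(k_p)}(0)^T}{F(0)F(0)^{\ast}}z^{k_p}$, using $F(z)Q_{F(z)} = 0$. It also runs an induction on the largest element of $K$, taking $G \in \mathcal{H}_{K_{p-1},n}^{\infty}(\D)$ with $K_{p-1} = K\setminus\{k_p\}$, so that the consistency identity you wrote down collapses, via Lemma 2.1(iii), to the single relation $F(0)G^{(k_p)}(0)^T = 0$; combined with $(FG^T)I = G^TF + Q_FQ_G^T$, this is exactly what makes the Koszul correction remove the entire coefficient $G^{(k_p)}(0)$ rather than just its component along $F(0)^{\ast}$. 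Your remark that $\xi \mapsto F(0)\xi$ is onto $\C$ is true but is not the role this identity plays.
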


\begin{cor} 
Let $ F= (f_1, f_2, \dots) \dots) \in \mathcal{H}_{K, n}^{\infty}(\D) $
and $ h \in H_K^{\infty}(\D) $, with $ [F(z)F(z)^{\ast}]^{\frac{1}{2}} \geq |h(z)| \all z \in \D
$. Suppose also that $ F(0) \neq \mathbf{0} $. Then there exists $ V = (v_1, v_2, \dots )
\in \mathcal{H}_{K, n}^{\infty}(\D) $ such that 
\begin{equation*} 
F(z)V(z)^T = h^3(z) \all z \in \D \text{.}
\end{equation*} 
The solution vector $ V(z) $ is bounded as in Theorem \ref{HK1}, with $ C_0 $ replaced with $ C_1 $.
\end{cor}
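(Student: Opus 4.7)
The plan is to mimic the proof of Theorem \ref{HK1}, but with the starting $H^\infty$ solution coming from the sharper Cegrell-Trent Wolff-type estimate \cite{estimate} instead of Treil's theorem \cite{T3}. Two observations make this reduction possible. First, since $H_K^\infty(\D)$ is an algebra, $h^3 \in H_K^\infty(\D)$. Second, the hypothesis $[F(z)F(z)^*]^{1/2} \geq |h(z)|$ is exactly the classical Wolff condition on $\D$, so by \cite{estimate} there exists $G \in H_{l^2}^\infty(\D)$ with
$$F(z)G(z)^T = h^3(z)\quad\text{for all } z \in \D,$$
and $\|G\|_{\infty,l^2} \leq C_1$.

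Next I would run the same Taylor-coefficient correction used in the proof of Theorem \ref{HK1}, with the datum $h^3$ playing the role of $h$ and with $G$ playing the role of the $H^\infty$ solution. Concretely, set $V = G - \Phi$ where $\Phi$ is the finite-dimensional correction built, using $F(0) \neq \mathbf{0}$, from the Taylor data $G(0), G'(0), \ldots, G^{(k_p)}(0)$ so that $F \Phi^T \equiv 0$ and each entry $v_i$ of $V$ satisfies $v_i^{(j)}(0) = 0$ for every $j \in K$. Because $h^3 \in H_K^\infty(\D)$, the Bezout identity $F V^T = h^3$ is preserved and $V \in \mathcal{H}_{K,n}^\infty(\D)$. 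The bound on $\Phi$ derived in Theorem \ref{HK1} then gives
$$\|V(z)\|_{l^2} \leq C_1 + \frac{\|G^{(k_p)}(0)\|_{l^2}}{k_p!\,\|F(0)\|_{l^2}}$$
via the triangle inequality, with $C_1$ in place of $C_0$ precisely because $G$ is now produced by \cite{estimate} rather than by \cite{T3}.

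The only nontrivial point—constructing the correction $\Phi$ in the kernel of $W \mapsto F W^T$ whose first $k_p + 1$ Taylor coefficients at $0$ match those of $G$—is the hard step, and it is already carried out in the proof of Theorem \ref{HK1}; it requires nothing more than $F(0) \neq \mathbf{0}$ and the fact that $F(0)^* / \|F(0)\|_{l^2}^2$ is a right inverse of $F(0)$. That argument is independent of which $H^\infty$ solution we start with, so it transfers verbatim to the present setting. Thus the corollary reduces to substituting the Wolff-type $H^\infty$ estimate into the machinery of Theorem \ref{HK1}, in complete parallel with the passage from Theorem \ref{CBH} to Corollary \ref{CCBH}.
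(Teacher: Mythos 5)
Your proposal matches the paper's own (brief) argument: the paper states that the corollaries are obtained exactly as the theorems, "using Wolff's Theorem instead of Treil's," i.e.\ by producing the $H^{\infty}$ solution of $F G^{T}=h^{3}$ from the estimate in \cite{estimate} (noting $h^{3}\in H_{K}^{\infty}(\D)$ since it is an algebra) and then running the Taylor-coefficient correction of Theorem \ref{HK1} unchanged, which yields the same bound with $C_{1}$ in place of $C_{0}$. This is essentially identical to what you wrote, so the proposal is correct and takes the same route.
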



For the third type of algebra, let $ K = \lbrace k_1, \dots , k_p \rbrace $ be a nontrivial finite subset of $
\Z_{+} $ such that $ H_K^{\infty}(\D) $ is an algebra, with $ k_1 < \dots < k_p $. For a fixed
Blaschke product, $ B $, we define 
\begin{equation*} 
H_{K(B)}^{\infty}(\D) = \lbrace
\sum_{\substack{j \notin K \\ j < k_p}} a_j B^j + B^{k_p + 1}g : g \in H^{\infty}(\D) \text{ and }
a_j \in \C \rbrace \text{,} 
\end{equation*} 
and we define $ \mathcal{H}_{K(B), n}^{\infty}(\D) $ similarly to $ \mathcal{H}_{K, n}^{\infty}(\D) $.
For $ F \in \mathcal{H}^{\infty}_{K(B),n}(\D) $, denote 
\begin{equation*}
F(z) = \underset{\substack{j \notin K \\ j < k_p}} \sum B^j(z)F_j + B^{k_p + 1}(z)F_{k_p+1}(z) \text{.}
\end{equation*}

\begin{thm} \label{HKB1} Let $ F= (f_1, f_2, \dots) \in \mathcal{H}_{K(B), n}^{\infty}(\D)
$ and $ h \in H_{K(B)}^{\infty}(\D) $, with $ 1 \geq F(z)F(z)^{\ast}\psi(F(z)F(z)^{\ast}) \geq |h(z)| \;\;  \text{ for all } \; z\in \D $. Suppose also that $ F_0 \neq \mathbf{0} $. Then there exists $ V = (v_1, v_2,
\dots ) \in \mathcal{H}_{K(B), n}^{\infty}(\D) $ such that 
\begin{equation*} 
F(z)V(z)^T = h(z) \;\;  \text{ for all } \; z\in \D 
\end{equation*}
and
\begin{equation*}
\|V(z)\|_{l^2} \leq \left(1 + \frac{1}{\|F_0\|}_{l^2} \right)C_0 \text{.}
\end{equation*}
\end{thm}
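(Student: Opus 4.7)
The plan is to apply Treil's theorem (Theorem B) on the unrestricted space $H_{l^2}^{\infty}(\D)$ and then modify the resulting solution to lie in $\mathcal{H}_{K(B), n}^{\infty}(\D)$, following the strategy of Theorems~\ref{CBH} and~\ref{HK1}. Let $S := \{j \notin K : j < k_p\}$ and write $F = \sum_{j \in S} B^j F_j + B^{k_p+1} F_{k_p+1}$, $h = \sum_{j \in S} B^j h_j + B^{k_p+1} h_{k_p+1}$, and seek $V$ in the parallel form $V = \sum_{j \in S} B^j V_j + B^{k_p+1} V_{k_p+1}$. By Theorem B there exists $G \in H_{l^2}^{\infty}(\D)$ with $FG^T = h$ on $\D$ and $\|G\|_{l^2} \leq C_0$.

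I first determine the constant vectors $V_j$ for $j \in S$ by matching the $B^m$-coefficients of $FV^T$ and $h$ recursively for $m = 0, 1, \ldots, k_p$. The case $m = 0$ gives $F_0 V_0^T = h_0$, which is solvable because $F_0 \neq \mathbf{0}$; for instance take $V_0 = h_0 F_0^{\ast}/\|F_0\|_{l^2}^2$. Each subsequent $V_m$ for $m \in S$ with $m > 0$ is then determined uniquely from the previously chosen $V_j$'s ($j < m$), again using $F_0 \neq \mathbf{0}$. When $m \in K$ with $m \leq k_p$, the matching equation reduces to $\sum_{i + j = m,\; i, j \in S} F_i V_j^T = 0$, which is automatic thanks to the Ryle--Trent algebra property of $K$: every decomposition $m = i + j$ with $m \in K$ must have $i \in K$ or $j \in K$, so the indexed sum is empty.

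With $V_{\text{partial}} := \sum_{j \in S} B^j V_j$ fixed, the residual $h - F V_{\text{partial}}^T$ has $B$-series vanishing through order $k_p$, so $\tilde h := (h - F V_{\text{partial}}^T)/B^{k_p+1} \in H^{\infty}(\D)$ and $F(G - V_{\text{partial}})^T = B^{k_p+1}\tilde h$. It remains to produce $V_{k_p+1} \in H_{l^2}^{\infty}$ with $F V_{k_p+1}^T = \tilde h$. Since $(G - V_{\text{partial}})/B^{k_p+1}$ need not lie in $H_{l^2}^{\infty}$ for an arbitrary Treil solution, I would add to $G$ a syzygy $\phi \in H_{l^2}^{\infty}$ of $F$ (that is, $F\phi^T = 0$) whose jets at each zero of $B$ cancel those of $G - V_{\text{partial}}$ through order $k_p$. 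Consistency of the prescribed interpolation data with the syzygy constraint $F\phi^T = 0$ follows by differentiating $FG^T = h$ and using $F, h \in H_{K(B)}^{\infty}$. Setting $V_{k_p+1} = (G + \phi - V_{\text{partial}})/B^{k_p+1}$ then gives $V = G + \phi \in \mathcal{H}_{K(B), n}^{\infty}(\D)$ with $FV^T = h$, and the announced bound $\|V\|_{l^2} \leq (1 + 1/\|F_0\|_{l^2})\,C_0$ follows from $\|G\|_{l^2} \leq C_0$ together with the norm estimate $\|\phi\|_{l^2} \lesssim C_0/\|F_0\|_{l^2}$ for the syzygy correction. The main obstacle is this last construction: executing the Corona-with-interpolation argument producing $\phi$ with the specified bound, generalizing the one-parameter construction used in \cite{ryle} for the $\mathbb{C} + BH^{\infty}$ algebra.
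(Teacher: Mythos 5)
Your reduction to a syzygy problem has the right general shape --- any two solutions of $FV^T=h$ differ by an element of $\ker F$ --- and your coefficient-matching step for the constant vectors $V_j$, $j\notin K$, $j<k_p$, is sound (Lemma 2.1(ii) does make the obstruction sum empty when $m\in K$, and $F_0\neq\mathbf{0}$ lets you solve each scalar equation $F_0V_m^T=\cdot$ by taking $V_m$ a multiple of $F_0^{\ast}$). But the argument has a genuine gap exactly where you flag it: the existence of a bounded $\phi\in H^{\infty}_{l^2}(\D)$ with $F\phi^T=0$ whose jets at every zero of $B$, to order $k_p+1$ times the multiplicity, cancel those of $G-V_{\text{partial}}$, together with the bound $\|\phi\|_{l^2}\lesssim C_0/\|F_0\|_{l^2}$. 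For an infinite Blaschke product this is a corona-type interpolation problem on a Blaschke sequence (and for $n=\infty$ an operator-valued one); verifying that the prescribed jet data are consistent with $F\phi^T=0$ is the easy part, while actually producing $\phi$ with a uniform norm bound is essentially as hard as the theorem itself, and no construction is given.

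The paper avoids this entirely by writing the correction explicitly with the Koszul $Q$-operators: it takes $V^T=G^T+Q_{F}Q_{G}^T\frac{F_0^{\ast}}{\|F_0\|^2}$, which is automatically a syzygy perturbation of $G$ since $FQ_F=0$, and then uses the identity \eqref{relation} to rewrite this as
\[
V^T \;=\; h\,\frac{F_0^{\ast}}{\|F_0\|^2}\;-\;G^T\bigl(F-F_0\bigr)\frac{F_0^{\ast}}{\|F_0\|^2}\text{,}
\]
whose membership in $\mathcal{H}^{\infty}_{K(B),n}(\D)$ can be read off term by term (using the inductive hypothesis that $G$ lies in a slightly larger subalgebra of the same $B$-series type, so that the product $G^T(F-F_0)$ lands where it should). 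No interpolation on $Z(B)$ is needed, and the norm estimate falls out of the same formula. To salvage your route you would either have to prove the interpolation-with-syzygy lemma with uniform bounds, or simply observe that $Q_{F}Q_{G}^T\frac{F_0^{\ast}}{\|F_0\|^2}$ already plays the role of the correction you are trying to build.
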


\begin{cor}
 Let $  F= (f_1, f_2, \dots) \in \mathcal{H}_{K(B), n}^{\infty}(\D)
$ and $ h  \in H_{K(B)}^{\infty}(\D) $, with $ [F(z)F(z)^{\ast}]^{\frac{1}{2}} \geq |h(z)| \all z
\in \D $. Suppose also that $ F_0 \neq \mathbf{0} $. Then there exists $ V = (v_1, v_2,
\dots )  \in \mathcal{H}_{K(B), n}^{\infty}(\D) $ such that 
\begin{equation*} 
F(z)V(z)^T = h^3(z) \all z \in \D \text{.} 
\end{equation*} 
The solution vector $ V(z) $ is bounded as in Theorem \ref{HKB1}, with $ C_0 $ replaced with $ C_1 $.
\end{cor}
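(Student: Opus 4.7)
The plan is to derive this corollary directly from Theorem \ref{HKB1} by replacing $h$ with $h^3$ and choosing the specific function $\psi(t)=t^{1/2}$. Because $H_{K(B)}^{\infty}(\D)$ is (by construction) closed under multiplication, $h\in H_{K(B)}^{\infty}(\D)$ forces $h^{3}\in H_{K(B)}^{\infty}(\D)$, so $h^{3}$ is a legitimate target function for Theorem \ref{HKB1}.

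First I would observe that the hypothesis $1\geq [F(z)F(z)^{\ast}]^{\frac{1}{2}} \geq |h(z)|$ on $\D$, upon cubing, yields
\begin{equation*}
F(z)F(z)^{\ast}\,\bigl[F(z)F(z)^{\ast}\bigr]^{\frac{1}{2}} \;=\; \bigl[F(z)F(z)^{\ast}\bigr]^{\frac{3}{2}} \;\geq\; |h(z)|^{3} \;=\; |h^{3}(z)|
\end{equation*}
for every $z\in\D$. Taking $\psi(t)=t^{\frac{1}{2}}$, this reads $F(z)F(z)^{\ast}\psi(F(z)F(z)^{\ast})\geq |h^{3}(z)|$, which is precisely the hypothesis of Theorem \ref{HKB1} with $h$ replaced by $h^{3}$. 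Also $\psi$ is non-decreasing from $[0,1]$ to $[0,1]$ and satisfies $\int_{0}^{1}\psi(t)/t\,dt=\int_{0}^{1}t^{-1/2}\,dt=2<\infty$, so it belongs to the class of admissible functions from Theorem B. The condition $F_{0}\neq\mathbf{0}$ is unchanged.

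Invoking Theorem \ref{HKB1} with the data $(F,h^{3},\psi)$ therefore produces $V=(v_{1},v_{2},\dots)\in\mathcal{H}_{K(B),n}^{\infty}(\D)$ with $F(z)V(z)^{T}=h^{3}(z)$ for all $z\in\D$, together with the norm bound $\|V(z)\|_{l^{2}}\leq(1+\|F_{0}\|_{l^{2}}^{-1})\,C_{0}$. The only modification needed for the estimate is to replace $C_{0}$ by $C_{1}$: under the stronger hypothesis $[FF^{\ast}]^{1/2}\geq|h|$, the $H^{\infty}$ solution invoked inside the proof of Theorem \ref{HKB1} can be taken to be the one from \cite{estimate} rather than the general Treil solution, and the constant from \cite{estimate} is by definition $C_{1}$.

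There is essentially no obstacle here; the only point that requires a line of verification is that the algebra $H_{K(B)}^{\infty}(\D)$ is indeed closed under taking cubes (so that $h^{3}$ sits inside it and Theorem \ref{HKB1} is applicable), and that the specific choice $\psi(t)=t^{1/2}$ satisfies the Dini-type integrability condition. Both are routine, so the corollary is a direct specialization of Theorem \ref{HKB1}, exactly parallel to how Corollary \ref{CCBH} follows from Theorem \ref{CBH}.
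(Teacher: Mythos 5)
Your proposal is correct and matches the paper's approach: the authors likewise obtain the corollaries by running the proof of the corresponding theorem with $h^3$ in place of $h$, using the $\psi(t)=t^{1/2}$ observation and the estimate from \cite{estimate} (Wolff-type solution) in place of the general Treil solution to get the constant $C_1$. Your packaging of this as a direct specialization of Theorem \ref{HKB1} is the same argument, and your two verification points (that $h^3$ remains in the algebra and that $t^{1/2}$ satisfies the Dini condition) are exactly the routine checks needed.
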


Suppose we take the hypotheses of Theorem \ref{HK1}, but we allow $ F(0) = \mathbf{0} $. Since
$F(z)$ is a vector of holomorphic functions, we have $ F(z) = z^mF_m(z) $ for some $ m \in
\mathbb{N} $ where the entries of $ F_m(z) $ are holomorphic on $ \D $ and $ F_m(0) \neq \mathbf{0}$. One might attempt to continue in the vein of the proof to Theorem \ref{HK1} with $F_m $ in place of $ F $. Unfortunately, we need not expect $ F_m $ to lie in $
\mathcal{H}^{\infty}_{K, n}(\D) $ or any subalgebra thereof. We encounter a similar problem if we
allow $ F_0 = \mathbf{0} $ in \ref{HKB1} and factor $ B(z) $ off of $ F(z) $.

However, there are conditions under which  generalized ideal membership (and thus Wolff's Theorem)  still hold even if we allow the vectors
above to be zero. For a set $ K $ such that $ H^{\infty}_K(\D) $ is an algebra and $ m \in
\mathbb{N} $, $ m \notin K $, define $ K - m = \lbrace j - m : j \in K \text{ and } j > m \rbrace $.

\begin{thm} \label{HK2} 
Let $ F = (f_1, f_2, \dots)\in  \mathcal{H}_{K, n}^{\infty}(\D) $
and $ h \in H_K^{\infty}(\D) $, with $ 1 \geq F(z)F(z)^{\ast}\psi(F(z)F(z)^{\ast}) \geq |h(z)| \;\;  \text{ for all } \; z\in \D
$. Suppose also that $ F(z) = z^mF_m(z) $ with $ F_m(0) \neq \mathbf{0} $. If either 
\begin{itemize}
\item[(i)] $ K - m $ defines an algebra $ H^{\infty}_{K-m}(\D) $, or 
\item[(ii)] $ m > k_p $,
\end{itemize} 
then there exists $ V= (v_1, v_2, \dots ) \in \mathcal{H}_{K, n}^{\infty}(\D)
$ such that 
\begin{equation*} 
F(z)V(z)^T = h(z) \;\;  \text{ for all } \; z\in \D
\end{equation*} 
and
\begin{equation*}
\|V(z)\|_{l^2} \leq C_0 \text{.}
\end{equation*}
\end{thm}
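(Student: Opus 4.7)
The plan is to factor $F(z) = z^m F_m(z)$ with $F_m(0) \neq \mathbf{0}$ and reduce the problem to one for $F_m$, where either Treil's Theorem B or Theorem~\ref{HK1} applies directly. I would first collect three preliminary facts: since $F F^* = |z|^{2m} F_m F_m^*$ and $|h| \leq F F^* \leq 1$, the estimate $|h(z)| = O(|z|^{2m})$ near $0$ combined with Schwarz's lemma forces $h(z) = z^{2m} \tilde h(z)$ for some $\tilde h \in H^\infty(\D)$; the subharmonic function $\|F_m(z)\|^2$ is bounded by $1/|z|^{2m}$ near $\partial\D$, which tends to $1$ as $|z| \to 1^-$, so $F_m F_m^* \leq 1$ on $\D$; and monotonicity of $\psi$ together with $|z|^{2m} F_m F_m^* \leq F_m F_m^*$ transfers the main hypothesis to $|\tilde h(z)| \leq F_m F_m^* \psi(F_m F_m^*)$, and analogously for $h_m := h/z^m = z^m \tilde h$.

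In case (ii), where $m > k_p$, I would apply Treil's Theorem B to the pair $(F_m, \tilde h)$ to produce $U \in H_{l^2}^\infty(\D)$ with $F_m U^T = \tilde h$ and $\|U\|_{l^2} \leq C_0$. Setting $V(z) := z^m U(z)$ yields $F(z) V(z)^T = z^{2m} F_m(z) U(z)^T = z^{2m} \tilde h(z) = h(z)$. Since every component of $V$ vanishes to order $m > k_p = \max K$ at $0$, we have $V \in \mathcal{H}_{K, n}^\infty(\D)$ automatically, and the estimate $\|V(z)\|_{l^2} = |z|^m \|U(z)\|_{l^2} \leq C_0$ is immediate.

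For case (i), I would instead work with $h_m$ and $F_m$. A Taylor-coefficient shift argument — using that $h_m^{(s)}(0)$ is proportional to $h^{(s+m)}(0)$ for $s > 0$, together with the equivalence $s + m \in K \Leftrightarrow s \in K - m$ — shows $h_m \in H_{K-m}^\infty(\D)$ and, by the same calculation applied to $F$, $F_m \in \mathcal{H}_{K-m, n}^\infty(\D)$ with $F_m(0) \neq \mathbf{0}$. Applying Theorem~\ref{HK1} inside this smaller algebra produces $V$ with $F_m V^T = h_m$, whence $F V^T = z^m h_m = h$. The main obstacle I anticipate is that Theorem~\ref{HK1} yields $V \in \mathcal{H}_{K-m, n}^\infty(\D)$, while the statement demands $V \in \mathcal{H}_{K, n}^\infty(\D)$; the delicate step is to use the interplay between the two algebra conditions on $K$ and $K - m$ and the equation $F_m V^T = h_m$ to force the remaining vanishing conditions at $0$ (those indexed by elements of $K$ lying below $m$), and to verify that the Taylor-correction term in the HK1 bound drops out thanks to the order-$m$ vanishing of $F$, yielding the sharp estimate $\|V\|_{l^2} \leq C_0$.
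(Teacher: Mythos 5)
Your preliminary reductions (Schwarz's lemma to write $h = z^{2m}\tilde h$, the maximum principle to get $F_m(z)F_m(z)^{\ast} \leq 1$, and monotonicity of $\psi$ to transfer the hypothesis to the pair $(F_m,\tilde h)$) and your entire treatment of case (ii) coincide with the paper's argument and are correct.

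Case (i), however, contains a genuine gap --- one you flag yourself as ``the delicate step'' but do not close, and it is precisely the crux of the theorem. Applying Theorem \ref{HK1} in the algebra $H^{\infty}_{K-m}(\D)$ to the pair $(F_m,\, h/z^m)$ produces a solution $V$ lying only in $\mathcal{H}^{\infty}_{K-m,n}(\D)$, i.e.\ with $V^{(s)}(0)=\mathbf{0}$ for $s \in K-m$. But since $m \notin K$ (otherwise $\mathbf{0}=F^{(m)}(0)=m!F_m(0)$), Lemma 2.1 gives $K-m \subseteq K_{p-1} \subseteq K$, so the containment $\mathcal{H}^{\infty}_{K,n}(\D) \subseteq \mathcal{H}^{\infty}_{K-m,n}(\D)$ runs the wrong way: membership in $\mathcal{H}^{\infty}_{K-m,n}(\D)$ is strictly weaker than what the theorem demands. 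Concretely, for $K=\{1,2,4\}$ and $m=3$ one has $K-m=\{1\}$, and nothing in your construction forces $V''(0)$ or $V^{(4)}(0)$ to vanish; the single scalar identity $F_mV^T=h/z^m$ cannot impose componentwise vanishing conditions on $V$, so no ``interplay'' between the two algebra conditions will rescue this. The paper's resolution is to keep the factor $z^m$ on the outside rather than absorbing it into the right-hand side: solve $F_m(z)U(z)^T=\tilde h(z)=h(z)/z^{2m}$ with $U \in \mathcal{H}^{\infty}_{K-m,n}(\D)$ (by induction, which is legitimate because $K-m\subseteq K_{p-1}$), and set $V(z)=z^mU(z)$. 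Then $F(z)V(z)^T=z^{2m}F_m(z)U(z)^T=h(z)$, and for $j\in K$ one computes
\begin{equation*}
V^{(j)}(0)=\binom{j}{m}\,m!\,U^{(j-m)}(0)=\mathbf{0},
\end{equation*}
since $j-m\in K-m$ when $j>m$, the case $j<m$ is trivial from the order-$m$ zero, and $j=m$ cannot occur. This same device disposes of your worry about the Taylor-correction term in the norm bound, since $\|V(z)\|_{l^2}=|z|^m\|U(z)\|_{l^2}\leq\|U\|_{\infty}$.
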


\begin{thm} \label{HKB2} 
Let $ F = (f_1, f_2, \dots) \in \mathcal{H}_{K(B), n}^{\infty}(\D)
$ and $ h \in H_{K(B)}^{\infty}(\D) $, with $ 1 \geq F(z)F(z)^{\ast}\psi(F(z)F(z)^{\ast}) \geq |h(z)| \;\;  \text{ for all } \; z\in \D $. Suppose also that $ F_0 = \mathbf{0} $, and let $ j_1 > 0 $ be the greatest power of $ B $ common to all terms of $ F $. If either
\begin{itemize}
\item[(i)] $ K - j_1 $ defines an algebra $ H^{\infty}_{K-j_1}(\D) $, or
\item[(ii)] $ j_1 > k_p $,
\end{itemize}
then there exists $ V = (v_1, v_2, \dots ) \in \mathcal{H}_{K(B), n}^{\infty}(\D) $ such that 
\begin{equation*} 
F(z)V(z)^T = h(z) \;\;  \text{ for all } \; z\in \D
\end{equation*}  
and
\begin{equation*}
\|V(z)\|_{l^2} \leq C_0 \text{.}
\end{equation*}
\end{thm}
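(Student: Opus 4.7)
The strategy is to factor $B^{j_1}$ out of both $F$ and $h$ and reduce to Theorem \ref{HKB1} (in Case (i)) or Treil's Theorem (in Case (ii)) applied to the shifted problem. First, write $F(z) = B(z)^{j_1}\tilde F(z)$, where $\tilde F \in H^{\infty}_{l^2}(\D)$ and, by the maximality of $j_1$, the constant term in the $B$-expansion of $\tilde F$ is non-zero. From $|h(z)| \le F(z)F(z)^*\psi(F(z)F(z)^*) \le |B(z)|^{2j_1}\tilde F\tilde F^*\psi(1)$, together with the boundedness of $\tilde F\tilde F^*$, we deduce that $h/B^{2j_1}$ is bounded off $Z(B)$, hence extends to $\phi := h/B^{2j_1} \in H^\infty(\D)$. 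Seeking $V$ in the form $V(z) = B(z)^{j_1}W(z)$ reduces $FV^T = h$ to the equivalent problem $\tilde F W^T = \phi$.

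Next we transfer the Wolff-type estimate to the pair $(\tilde F,\phi)$: dividing the original inequality by $|B|^{2j_1}$, scaling $\tilde F$ by $M := \sup_z \sqrt{\tilde F\tilde F^*}$ (and correspondingly rescaling $\phi$) so that $\tilde F\tilde F^* \le 1$, and using $|B| \le 1$ together with the monotonicity of $\psi$, we obtain a bound of the form $|\phi(z)| \le \tilde F\tilde F^* \psi_1(\tilde F\tilde F^*)$ for a non-decreasing $\psi_1$ still satisfying the Dini condition $\int_0^1 \psi_1(t)/t\,dt < \infty$ required by Treil's Theorem. In Case (i), a reindexing of the $B$-expansion places $\tilde F \in \mathcal{H}^{\infty}_{K-j_1(B), n}(\D)$ with non-zero constant term (using $j_1 \notin K$), and by the uniqueness of the $B$-decomposition in $H^{\infty}_{K(B)}(\D)$ together with the $2j_1$-vanishing of $h$, we also get $\phi \in H^{\infty}_{K-j_1(B)}(\D)$; applying Theorem \ref{HKB1} yields $W \in \mathcal{H}^{\infty}_{K-j_1(B), n}(\D)$ with $\tilde F W^T = \phi$ and $\|W\|_{l^2} \le C_0$. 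In Case (ii), $K - j_1 = \emptyset$ and Treil's Theorem applies directly to $(\tilde F,\phi) \in H^{\infty}_{l^2}(\D) \times H^\infty(\D)$, again producing such a $W$.

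Setting $V = B^{j_1} W$, we compute $FV^T = B^{j_1}\tilde F \cdot B^{j_1}W^T = B^{2j_1}\phi = h$, with $\|V\|_{l^2} \le \|W\|_{l^2} \le C_0$ since $|B| \le 1$. A direct check of the $B$-expansion confirms $V \in \mathcal{H}^{\infty}_{K(B), n}(\D)$: in Case (i), multiplication by $B^{j_1}$ sends the allowed powers $\{0 \le l < k_p - j_1 : l \notin K - j_1\}$ of $W$ into $\{j_1 \le j < k_p : j \notin K\}$, which is a subset of the allowed powers $\{0 \le j < k_p : j \notin K\}$ for $V$; in Case (ii), $j_1 > k_p$ forces $V \in B^{k_p+1}H^{\infty}_{l^2}(\D)$, which is the tail of $\mathcal{H}^{\infty}_{K(B), n}(\D)$. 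The main obstacle is Case (i): verifying that $\phi$ lies in the shifted algebra $H^{\infty}_{K-j_1(B)}(\D)$, not merely in $H^\infty(\D)$, after division by $B^{2j_1}$; this is where the algebra hypothesis on $K - j_1$ plays its essential role, by guaranteeing that the unique $B$-decomposition of $h$ in $H^{\infty}_{K(B)}(\D)$ realigns into a valid expansion in the shifted algebra.
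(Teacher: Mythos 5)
Your proposal is correct and follows essentially the same route as the paper: factor $B^{j_1}$ out of $F$ and $B^{2j_1}$ out of $h$, solve the reduced problem $\tilde{F}W^T = \phi$ via Theorem \ref{HKB1} in case (i) or via Treil's theorem in case (ii), and set $V = B^{j_1}W$. Your added care with the normalization $\tilde{F}\tilde{F}^{\ast} \leq 1$ (and the induced $\psi_1$) and with verifying $\phi \in H^{\infty}_{(K-j_1)(B)}(\D)$ addresses details the paper passes over silently.
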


\section{Preliminaries}

Integral to the proofs of our theorems are ``Q-operators'' which are derived from the Kozsul complex
\cite{ryle}. As these operators have already been discussed in several papers, we will only give the
pertinent results here. Proofs of these results  and the identification of $Q^{(n)}$ operators and their matrices can be found in \cite{ryle}.

We let $ H \wedge K $ denote the exterior product between two Hilbert spaces $ H $ and $ K $, and $
l^2_{(n)} = \wedge_{i=1}^{n}l^2 $. In keeping with this notation, $ l^2_{(0)} = \C $.

Let $ \lbrace e_i \rbrace_{i = 1}^{\infty} $ denote the standard basis in $ l^2 $. If $ I_n $
denotes increasing $n$-tuples of positive integers and if $ (i_1, i_2, \dots , i_n) \in I_n $, we
let $ \pi_n = ( i_1, i_2, \dots , i_n ) $ and, abusing notation, we write $ \pi_n \in I_n
$. If we define $ e_{\pi_n} = e_{i_1} \wedge e_{i_2} \wedge \dots \wedge e_{i_n} $, then $ \lbrace
e_{\pi_n} \rbrace_{\pi_n \in I_n} $ is defined to be the standard basis for $ l^2_{(n)} $.

Let $ A = ( a_1 , a_2 , \dots ) \in l^2 $ and, for $ n = 1, 2, \dots $, define
\begin{equation*}
Q_A^{(n)\ast} : l^2_{(n)} \rightarrow l^2_{(n+1)}
\end{equation*}
by
\begin{equation*}
Q_A^{(n)\ast}(\underline{w}_n) = \overline{A} \wedge \underline{w}_n \text{,}
\end{equation*}
where $ \underline{w}_n \in l^2_{(n)} $.

We make note of some pertinent facts concerning these operators. First, we note that the entries of $ Q_A^{(n)} $ belong to the set $ \lbrace 0, \pm a_1 , \pm a_2 , \dots \rbrace $. Next, thanks to the anti-commutivity of the exterior algebra, $ \ran Q_A^{(n)\ast} \subset \ker Q_A^{(n+1)\ast} $, meaning
\begin{equation} \label{ker}
\ran Q_A^{(n+1)} \subset \ker Q_A^{(n)} \text{.}
\end{equation}
Finally, if $ B $ is also in $ l^2 $, then
\begin{equation} \label{relation}
(AB^T)I_{l^2} = B^T A + Q_A Q_B^T
\end{equation}
where $ I_{l^2} $ is an identity matrix and $ Q_{\Omega} = Q_{\Omega}^{(1)} $ for $ \Omega \in \lbrace A, B \rbrace $.

Note that $ Q_A^{(0)\ast} = \overline{A} $, so $ Q_A^{(0)} = A^T $. A construction of the matrix representation of $ Q_A^{(1)} $ can be found in \cite{ryle}.

We also draw on the following results from \cite{ryle}:

\begin{lemma}
Let $K\subseteq \mathbb{N}$ such that $ H_K^{\infty}(\D)$ is an algebra. Then 
\begin{align*}
& (i) \;\; {k_0}\in K \;\; \text{if and only if } \;\; \varphi (z)=z^{k_0}\in  H_K^{\infty}(\D).\\
& (ii)\text{ If} \;\; j ,k \notin K, \text{then}\;\;  j+k \notin K.\\
& (iii) \text{ Suppose} \;\; {k_0}\in K. \; \text{If}\;\;  1<j< k_0 \;\; \text{satisfies} \;\; j\notin K,\;\;  \text{then}\;\; k_0-j\in K.
\end{align*}

\end{lemma}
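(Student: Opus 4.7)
The plan is a short computation built on Leibniz's product rule together with a single family of test functions, the monomials $z^{k_0}$. Part (i) reduces to inspecting the Taylor expansion of $\varphi(z)=z^{k_0}$: because $\varphi^{(j)}(0)=k_0!\,\delta_{j,k_0}$, membership of $\varphi$ in $H_K^{\infty}(\D)$ is controlled solely by whether $k_0$ appears in $K$, and this yields (i) directly from the defining vanishing condition. The algebra hypothesis is not needed for this step.

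For (ii), I would feed the two monomials $z^j$ and $z^k$ into the algebra structure. Given $j,k\notin K$, part (i) places $z^j$ and $z^k$ in $H_K^{\infty}(\D)$. Closure under multiplication then gives $z^{j+k}=z^j z^k\in H_K^{\infty}(\D)$, and a second application of (i) forces $j+k\notin K$. Equivalently, this is the content of Leibniz's rule: the product of two functions whose Taylor coefficients vanish on $K$ has its Taylor coefficient at index $j+k$ determined entirely by the cross term, which is nonzero precisely because neither $j$ nor $k$ lies in $K$.

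Part (iii) then falls out of (ii) by contradiction. Suppose $k_0\in K$, $1<j<k_0$, and $j\notin K$, and assume toward contradiction that $k_0-j\notin K$. Applying (ii) to the pair $(j,k_0-j)\subset\Z_{+}\setminus K$ would give $k_0=j+(k_0-j)\notin K$, contradicting the hypothesis $k_0\in K$. Hence $k_0-j\in K$.

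No serious obstacle is expected: once the role of monomial test functions is identified, everything becomes index bookkeeping. The only subtle point is aligning the polarity in (i) carefully with the defining condition $\varphi^{(k_0)}(0)\neq 0$, so that the biconditional is applied in the right direction when reading off which monomials sit inside $H_K^{\infty}(\D)$; after that, parts (ii) and (iii) are immediate consequences of the algebra hypothesis.
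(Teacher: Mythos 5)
Your argument is correct in substance, and it is the standard one; note that the paper itself does not prove this lemma but imports it from Ryle--Trent \cite{ryle}, so there is no in-paper proof to compare against. One point you should make explicit rather than leave as a remark about ``polarity'': statement (i) as printed is false in the direction asserted. If $k_0\in K$, then $\varphi(z)=z^{k_0}$ satisfies $\varphi^{(k_0)}(0)=k_0!\neq 0$, so $\varphi\notin H_K^{\infty}(\D)$; conversely if $k_0\notin K$ then $\varphi^{(j)}(0)=0$ for every $j\in K$, so $\varphi\in H_K^{\infty}(\D)$. Hence the correct biconditional is $k_0\in K$ if and only if $z^{k_0}\notin H_K^{\infty}(\D)$ (a typo in the paper's statement). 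Your proofs of (ii) and (iii) silently use this corrected form --- you pass from $j,k\notin K$ to $z^j,z^k\in H_K^{\infty}(\D)$, invoke closure under multiplication to get $z^{j+k}\in H_K^{\infty}(\D)$, and conclude $j+k\notin K$; then (iii) follows from (ii) by contradiction since $1\le k_0-j$ and $j$ are both positive integers outside $K$ --- and all of that is sound once (i) is stated with the negation in the right place. The Leibniz-rule gloss in (ii) is unnecessary (the product of two monomials is a single monomial), but it does no harm.
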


\begin{lemma} If $ H_K^{\infty}(\D) $ is an algebra, then there exists $ d \in \mathbb{N} $, a finite
set $ \lbrace n_i \rbrace_{i=1}^p \subset \mathbb{N} $ with $ n_1 < \dots < n_p $ and $ gcd(n_1,
\dots n_p) = 1 $, and a positive integer $ N_0 > n_p $ so that \begin{equation*} \mathbb{N} - K =
\lbrace n_1d, n_2d, \dots , n_pd, N_0d, (N_0 + j)d : j \in \mathbb{N} \rbrace \text{.}
\end{equation*} \end{lemma}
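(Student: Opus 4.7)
The plan is to recognize $\mathbb{N} - K$ as a dilation of a numerical semigroup and then invoke the classical Sylvester--Frobenius structure theorem. Let $S := \mathbb{N} - K$. By Lemma 2.1(ii), $S$ is closed under addition, i.e., $S + S \subseteq S$. Let $d := \gcd(S)$; this is well defined because for any enumeration of $S$ the partial gcds $d_n := \gcd(s_1, \ldots, s_n)$ form a non-increasing sequence of positive integers and therefore stabilize. Then $S \subseteq d\,\mathbb{N}$, so the rescaled set $T := \{s/d : s \in S\}$ is an additively closed subset of $\mathbb{N}$ whose overall gcd equals $1$.

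Next I would choose finitely many $t_1, \ldots, t_k \in T$ with $\gcd(t_1, \ldots, t_k) = 1$. Since $T + T \subseteq T$ and each $t_j \in T$, every nonzero non-negative integer combination $c_1 t_1 + \cdots + c_k t_k$ lies in $T$. By Sylvester--Frobenius, every sufficiently large positive integer admits such a representation, so $\mathbb{N} \setminus T$ is finite. Let $N_0$ be the conductor of $T$ --- the least positive integer with $[N_0, \infty) \cap \mathbb{N} \subseteq T$ --- and enumerate $T \cap [1, N_0)$ as $n_1 < \cdots < n_p$. Multiplying back by $d$ yields
\[
\mathbb{N} - K = \{n_1 d, n_2 d, \ldots, n_p d\} \cup \{(N_0 + j) d : j \in \mathbb{Z}_{\geq 0}\},
\]
with $N_0 > n_p$ by construction, and the maximality of the extracted $d$ accounts for the gcd-$1$ condition (for instance, $\gcd(N_0, N_0+1) = 1$ already forces the overall gcd of $T$ to be $1$).

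The main obstacle is the Sylvester--Frobenius fact that an additively closed subset of $\mathbb{N}$ whose elements have overall gcd $1$ must contain every sufficiently large positive integer. This is classical and follows from Bezout's identity: writing $1 = \sum_j m_j t_j$ with $m_j \in \mathbb{Z}$, one corrects the negative coefficients by adding suitable non-negative combinations of the $t_j$'s, producing a non-negative representation of every large integer. No further input beyond the additive closure furnished by Lemma 2.1(ii) is needed; the rest is purely combinatorial.
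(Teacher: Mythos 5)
The paper itself offers no proof of this lemma (it defers to Ryle--Trent \cite{ryle}), so I am comparing your argument against the standard one; your route through numerical semigroups and Sylvester--Frobenius is indeed the natural and presumably intended proof, and everything up to the final bookkeeping is correct: Lemma 2.1(ii) gives additive closure of $S=\mathbb{N}-K$, extracting $d=\gcd(S)$ and rescaling gives an additively closed $T\subseteq\mathbb{N}$ with $\gcd(T)=1$, and the Bezout/Frobenius argument shows $\mathbb{N}\setminus T$ is finite.

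There is, however, a genuine defect in the last step. Taking $N_0$ to be the conductor of $T$ and setting $\{n_1,\dots,n_p\}=T\cap[1,N_0)$ does \emph{not} guarantee $\gcd(n_1,\dots,n_p)=1$. Concretely, $K=\{1,3\}$ defines an algebra (one checks directly that $(fg)'(0)=(fg)'''(0)=0$ whenever $f',f''',g',g'''$ vanish at $0$), and then $T=\mathbb{N}-K=\{2,4,5,6,\dots\}$ has conductor $N_0=4$, so your recipe yields $\{n_1\}=\{2\}$ with $\gcd=2$. Your parenthetical justification --- that $\gcd(N_0,N_0+1)=1$ forces the overall gcd of $T$ to be $1$ --- proves something you already knew by construction (namely $\gcd(T)=1$) but says nothing about the gcd of the \emph{initial segment} $T\cap[1,N_0)$, which is what the lemma asserts. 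The repair is easy but must be said: since $T$ contains all integers from the conductor on, you may enlarge $N_0$ past the conductor until the partial gcd of $T\cap[1,N_0)$ stabilizes at $\gcd(T)=1$ (in the example, $N_0=6$ gives $\{n_i\}=\{2,4,5\}$); the tail condition $[N_0,\infty)\subseteq T$ and the requirement $N_0>n_p$ are preserved under this enlargement. With that one-line fix the proof is complete.
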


Lemma 2.2  tells us that the nontrivial sets $ K \subset \mathbb{N} $ for which $ H_K^{\infty}(\D)
$ is an algebra are the sets $ K $ for which there exist $ l_1 < \dots < l_r $ in $ \mathbb{N} $
with $ gcd(l_1, \dots , l_r) = d $ so that $ \mathbb{N} - K $ is the semigroup of $ \mathbb{N} $
generated by $ \lbrace l_1, \dots , l_r \rbrace $ under addition.

Thus the elements of $ H_K^{\infty}(\D) $ have the form \begin{equation*} F(z) = f_0 + f_1z^{n_1d} +
\dots + f_jz^{n_jd} + f_{j+1}z^{(n_j+1)d} + f_{j+2}z^{(n_j+2)d} + \dots \end{equation*} where $ f_i
\in \C $. Letting $ w = z^d $ yields \begin{equation*} F_1(w) = f_0 + f_1w^{n_1} + \dots +
f_{j-1}w^{n_{j-1}} + \sum_{k=0}^{\infty} f_{j+k}w^{n_j+k} \text{.} \end{equation*} Thus $ F_1(w) $
is contained in the algebra $ H_{K_1}^{\infty}(\D) $, where \begin{equation*} K_1 = \lbrace 1, \dots
, n_1-1, n_1+1, \dots , n_2-1, n_2+1, \dots , n_j-1 \rbrace \end{equation*} is a finite set.

The above argument suggests us that the problem of finding a solution to the ideal problem in $
H_K^{\infty}(\D) $, where $ K $ is infinite, can be reduced to two simpler steps. First, solve the
corresponding problem in $ H_{K_1}^{\infty}(\D) $,  where $ K_1 $ is finite as above. Then, take those
solutions in $ H_{K_1}^{\infty}(\D) $ and compose them with $ z^d $ in order to get the solution in
$ H_K^{\infty}(\D) $.

\section{The Proofs}

Our approach for each proof is similar. Since we are dealing with subspaces of $ H^{\infty}(\D) $, we use Treil's (Wolff's) Theorem to find a solution $ G \in H_{l^2}^{\infty}(\D) $ to
$ F(z)G(z)^T = h(z) $ ( respectively $ F(z)G(z)^T = h^3(z) $ ). Then we define $V$  as $ V(z)^T = G(z)^T + Q_{F(z)}X(z)^T, \; \text {for all } \;  z\in \D ,$  where $ Q_{F(z)}=Q^{(1)}_{ F(z)} $ and $ X \in H^{\infty}(\D)_{l^2}.$\\
Since  $Q^{(0)}_{ F(z)}=F(z), $ the inclusion (3) with $n=0$ implies that 
$$ F(z)Q_{F(z)}=Q_{F(z)}^{(0)} Q_{F(z)}^{(1)}=0  \; \text{for all} \; z\in \D.$$ 
So $F(z) V(z)^T =F(z) V(z)^T=h(z)$ (respectively $F(z) V(z)^T=h^3(z))$. That means $V$ is a solution. Therefore, our goal is to show that V belongs to the appropriate subalgebra. 

\begin{proof}[Proof of Theorem \ref{CBH}]

Let $ F \in (\C + BH^{\infty}(\D))_{l^2} $, $ h \in \C + BH^{\infty}(\D) $, and suppose
$$ 1 \geq F(z)F(z)^{\ast}\;  \psi \left(F(z)F(z)^{\ast}\right)  \geq |h(z)| \;\;  \text{ for all } \; z\in \D. $$
 By Treil's theorem, there exists $ G \in H_{l^2}^{\infty}(\D) $ such that
\begin{equation*} 
F(z)G(z)^T = h(z) \;\;  \text{ for all } \; z\in \D \text{.} 
\end{equation*}

Write $ F(z) = F_c + B(z)F_B(z) $, where $ F_c \in l^2 $ and $ F_B \in H_{l^2}^{\infty}(\D) $.
Also, write $ h(z) = h_c + B(z)h_B(z) $, with $ h_c \in \C $ and $ h_B \in H^{\infty}(\D) $. We consider two cases.

Suppose first that $ F_c \neq \mathbf{0} $.\\
 By \eqref{relation}, we have 
\begin{align*}
&h(z)I = (F(z)G(z)^T)I = G(z)^TF(z) + Q_{F(z)}Q_{G(z)}^T \\
\implies & (h_c + B(z)h_B(z))I= G(z)^T(F_c + B(z)F_B(z)) + Q_{F(z)}Q_{G(z)}^T. 
\end{align*} 

 Thus 
\begin{align*} 
&(h_c + B(z)h_B(z))F_c^{\ast} = G(z)^T(F_c + B(z)F_B(z))F_c^{\ast} + Q_{F(z)}Q_{G(z)}^TF_c^{\ast} \\ 
\implies &   h_cF_c^{\ast} + B(z)(h_B(z) - G(z)^TF_B(z))F_c^{\ast} = G(z)^TF_cF_c^{\ast} +
Q_{F(z)}Q_{G(z)}^TF_c^{\ast} \\ 
\implies &  \frac{h_c}{\|F_c\|^2}F_c^{\ast} + B(z)\left(  h_B(z) -
G(z)^TF_B(z)\right)\frac{F_c^{\ast}}{\|F_c\|^2} = G(z)^T + Q_{F(z)}Q_{G(z)}^T\frac{F_c^{\ast}}{\|F_c\|^2} \text{.}
\end{align*} 
The right hand side of the last equation is clearly a solution $ V(z)^T $ to $
F(z)V(z)^T = h(z) $, while the left hand side shows this solution is in $ (\C + BH^{\infty}(\D))_{l^2} $. Thus we take $ X(z)^T = Q_{G{(z)}}^T\frac{F_c^{\ast}}{\|F_c\|^2} $. 

For the norm estimate, we have $$ \|V\|_{\infty} \leq
\left(1 + \frac{1}{\|F_c\|^2}\right)\|G\|_{\infty} .$$

Now suppose  $ F_c = \mathbf{0} $. We thus have $$ [\vert B(z)\vert^2 F_B(z)F_B(z)^{\ast}]\;  \psi \left( \vert B(z)\vert ^{2} F_B(z)F_B(z)^{\ast}\right) \geq |h_c+ B(z)h_B(z)|  \;\;  \text{ for all } \; z\in \D. $$ Letting $ z = \alpha $, where $ \alpha $ is a zero of $ B(z) $, we see that $ h_c = 0 $. Thus 
$$
[\vert B(z)\vert^2 F_B(z)F_B(z)^{\ast}]\;  \psi \left( \vert B(z)\vert ^{2} F_B(z)F_B(z)^{\ast}\right) \geq |B(z)||h_B(z)| $$
This implies that we can factor at least one more $B$ out from $h_B$. Since $ \psi $ is increasing on $ [0, 1] $ and $ |B(z)| \leq 1 $ on $ \D $, we get  
$$[F_B(z)F_B(z)^{\ast}]\;  \psi \left(F_B(z)F_B(z)^{\ast}\right) \geq |h_{B_1}(z)|,$$ where $h_B=Bh_{B_1}$. We should note that $h_{B_1}$ may contain more $B$'s.

By Treil's Theorem, there exists $ G_1 \in H_{l^2}^{\infty}(\D) $ such that
\begin{align*} 
& F_B(z)G_1(z)^T = h_{B_1}(z) \\ 
\implies &  F(z)B(z)G_1(z)^T = B^2(z)h_{B_1}(z) = h(z) \;\;  \text{ for all } \; z\in \D \text{.} 
\end{align*} 
Thus, $ B(z)G_1(z)^T $ is the solution we seek.

We also see that $ \|B(z)G_1(z)^T\|_{\infty} \leq \|G_1(z)\|_{\infty}  $. This completes the proof.

\end{proof}

\begin{proof}[Proof of Corollary \ref{CCBH}]
The proof of this Corollary is similar to the proof of Theorem \ref{CBH}. We replace $h$ with $h^3$ and use Wolff's Theorem in $H^{\infty}(\D)$.
As in Theorem \ref{CBH}, we get a solution $V$ of $FV^{T}=h^{3}$ in $ (\C + BH^{\infty}(\D))_{l^2} $ satisfying  $$ \|V\|_{\infty} \leq
\left(1 + \frac{1}{\|F_c\|^2}\right)\|G_1\|_{\infty},$$ where $G_1$ is the $\left(H^{\infty}(\D)\right)_{l^2}$ solution of $F(z)G(z)^{T}=h^{3}(z).$ For the norm estimate, we draw upon the estimate in \cite{estimate} with $ \psi(t) =
t^{\frac{1}{2}} $.
\end{proof}

The proofs of Theorems \ref{HK1} through \ref{HKB2} are by induction. Since in each case a solution in $ H^{\infty}(\D) $ exists, by Treil, our use of induction is justified. Similarly,  Corollaries can be obtained using Wolff's Theorem instead of Treil's, as in Corollary 1.1. 

We denote $ K_{p-1} = K - \{{k_p}\} $, where $ k_p $ is the largest member of $ K $. If $ H^{\infty}_K(\D) $ is an algebra, then so is $ H^{\infty}_{K_{p-1}}(\D) $.

\begin{proof}[Proof of Theorem \ref{HK1}]

Let $ F  \in \mathcal{H}_{K, n}^{\infty}(\D) $, $ h  \in H_K^{\infty}(\D) $ such that $
F(0) \neq \mathbf{0} $ and 
\begin{equation*} 
1 \geq F(z)F(z)^{\ast}\psi(F(z)F(z)^{\ast}) \geq |h(z)| \;\;  \text{ for all } \; z\in \D \text{.}\ 
\end{equation*} 
By induction, there exists $ G \in \mathcal{H}_{K_{p-1},
n}^{\infty}(\D) $ with 
\begin{equation*}
 F(z)G(z)^T = h(z) \;\;  \text{ for all } \; z\in \D \text{.}
\end{equation*}

We denote ``$k_p$'' by ``$k$'', and we let \begin{equation*} X(z)^T =
\frac{1}{k!}\frac{Q_{F(0)}^{\ast}G^{(k)}(0)^T}{F(0)F(0)^{\ast}}z^k \in H_{K_{p-1}}^{\infty}(\D)
\text{.} \end{equation*} We consider \begin{equation*} V(z)^T = G(z)^T - Q_{F(z)}X(z)^T \text{.}
\end{equation*} We see that $$ F(z)V(z)^T = h(z) \;\;  \text{ for all } \; z\in \D \;\;  \text{and} \;\;  V(z) \in
\mathcal{H}_{K_{p-1}, n}^{\infty}(\D). $$ We must show that $ V^{(k)}(0) = \mathbf{0} $. But by
\eqref{relation}, \begin{align*} V^{(k)}(0)^T &= G^{(k)}(0)^T - \sum_{j=0}^k \left(
\begin{array}{c}k\\j\end{array}\right) Q_{F(0)}^{(k-j)}X^{(j)}(0)^T \\ &= G^{(k)}(0)^T -
Q_{F(0)}X^{(k)}(0)^T \\ &= G^{(k)}(0)^T -
\frac{Q_{F(0)}Q_{F(0)}^{\ast}}{F(0)F(0)^{\ast}}G^{(k)}(0)^T \\ &=
\frac{F(0)^{\ast}F(0)}{F(0)F(0)^{\ast}}G^{(k)}(0)^T \text{.} \end{align*}

Our proof thus depends on establishing that $ F(0)G^{(k)}(0)^T = 0 $. But $ F(z)G(z)^T = h(z) $ on
$ \D $, and $ h \in H_K^{\infty}(\D) $. Differentiating $ k $ times and evaluating at 0, we
obtain \begin{equation}\label{woop} \sum_{j=1}^k \left( \begin{array}{c} k\\j \end{array} \right)
F^{(k-j)}(0)G^{(j)}(0)^T = 0 \text{.} \end{equation} Since $ G(z) \in
\mathcal{H}_{K_{p-1},n}^{\infty}(\D) $, we have $ G^{(j)}(0) = \mathbf{0} \; \text{ for all} \;  j \in K_{p-1} $.
If $ j \notin K $ and $ j < k $, we have $ k - j \in K $, so $ F^{(k-j)}(0) = \mathbf{0} $. Thus
\eqref{woop} becomes \begin{equation*} F(0)G^{(k)}(0)^T = 0 \end{equation*} which is the desired
result.

For the norm estimate, we observe that 
\begin{align*}
 \|V\|_{\infty} & \leq \|G\|_{\infty} + \frac{1}{k!}\left \|Q_{F(z)} \frac{Q_{F(0)}^{\ast}G^{(k)}(0)^T}{F(0)F(0)^{\ast}}z^k \right\|_{\infty} \\
& \leq \|G\|_{\infty} +  \frac{1}{k!} \|Q_{F(z)}\|_{\infty} \frac{\|Q_{F(0)}^{\ast}\|_{l^2}\|G^{(k)}(0)\|_{l^2}}{\|F(0)\|_{l^2}^2} \\
& \leq \|G\|_{\infty} + \frac{1}{k!} \frac{\|G^{(k)}(0)\|_{l^2}}{\|F(0)\|_{l^2}} \text{.}
\end{align*}

\end{proof}

\begin{proof}[Proof of Theorem \ref{HK2}]

Observe first that $ m \notin K $, or else we would have $
\mathbf{0} = \frac{d^m}{dz^m}[z^mF_m(z)]|_{z=0} = m!F_m(0) $, and $ F_m(0) \neq \mathbf{0} $, by assumption. 
Similarly to Theorem \ref{CBH} in the case where $ F_c = \mathbf{0} $, we have, by Treil, $ F_m(z)G_m(z)^T = h_m(z) \all z \in \D $, where $ G_m \in H^{\infty}(\D) $
and $ h(z) = z^{2m}h_m(z) $. Thus
\begin{equation*}
F(z)z^mG_m(z)^T = z^{2m}h_m(z) = h(z) \all z \in \D \text{.}
\end{equation*}

We wish to show $ z^{m}G_m(z) \in \mathcal{H}_{K, n}^{\infty}(\D) $. If $ m > k_p $, the result is immediate.

If $ m < k_p $, then suppose  that $ K - m $ defines an algebra $ H^{\infty}_{K-m}(\D) $. Since $ m \notin K $, $ K - m \subset K_{p-1} $ and thus $ H_{K_{p-1}}^{\infty}(\D) \subset H_{K-m}^{\infty}(\D) $. Using induction, we can thus take $ G \in H^{\infty}_{K-m}(\D) $. Now, $
z^mG_m(z) \in \mathcal{H}_{K,n}^{\infty}(\D) $, for if $ j \in K $, then
\begin{equation*} 
\frac{d^j}{dz^j}(z^mG_m(z))|_{z=0} = \left(\begin{array}{c}
j\\m\end{array}\right)m!G_m^{(j-m)}(0) = \mathbf{0} \text{.} 
\end{equation*} 
(We assume here that $ j > m $. If $ j < m $, the result is trivial.) 

Finally, $ \|V\|_{\infty} \leq \|G_m\|_{\infty} $.

\end{proof}

For $ F \in H^{\infty}_{K(B)}(\D) $, denote $ F(z) = B^{j_1}(z)F_{j_1} + \dots + B^{j_n}(z)F_{j_n} + B^{k_p+1}(z)F_{k_p+1}(z)  $, where $ F_{j_i} \in \C $ for $ i = 1, \dots , n $ and $ F_{k_p+1}(z) \in \left(H^{\infty}(\D)\right)^n $. We inductively assume there exists a solution $ G(z) = B^{j_1}(z)G_{j_1} + \dots + B^{j_n-1}(z)G_{j_n-1} + B^{j_n}(z)G_{j_n}(z) $. One can check to see that $ G $ belongs to a subalgebra containing $ H^{\infty}_{K(B)}(\D) $.

\begin{proof}[Proof of Theorem \ref{HKB1}]

Since $ F_0 \neq \mathbf{0} $, denote $ j_1 = 0 $. Proceeding as in the proof of Theorem \ref{CBH} in the case $ F_c \neq \mathbf{0} $, we obtain 
\begin{align*} 
& [h_{0} + B^{j_2}(z)h_{j_2} + \dots + B^{j_n}(z)h_{j_n} + B^{k_p+1}(z)h_{k_p+1}(z)] \frac{F_0^{\ast}}{\|F_0\|^2} \\ 
&- G(z)^T[B^{j_2}(z)F_{j_2} + \dots + B^{j_n}(z)F_{j_n} + B^{k_p+1}(z)F_{k_p+1}(z)] \frac{F_0^{\ast}}{\|F_0\|^2} \\ 
&= G(z)^T + Q_{F(z)}Q_{G(z)}^T\frac{F_0^{\ast}}{\|F_0\|^2} \text{.} 
\end{align*} 
The remainder of the proof consists of showing that the left-hand side of this equation lies in $ \mathcal{H}^{\infty}_{K(B),n}(\D) $. We observe that
\begin{align*}
& G(z)^T[B^{j_2}(z)F_{j_2} + \dots + B^{j_n}(z)F_{j_n} + B^{k_p+1}(z)F_{k_p+1}(z)] \\
&= [G_0 + B^{j_2}(z)G_{j_2} + \dots + B^{j_n-1}(z)G_{j_n-1}]^T \\
& \times  [B^{j_2}(z)F_{j_2} + \dots + B^{j_n}(z)F_{j_n} + B^{k_p+1}(z)F_{k_p+1}(z)] \\
&+ B^{j_n}(z)G_{j_n}(z)^T [B^{j_2}(z)F_{j_2} + \dots + B^{j_n}(z)F_{j_n} + B^{k_p+1}(z)F_{k_p+1}(z)]
\end{align*}

The first term is clearly in $ B(\C^n, \mathcal{H}^{\infty}_{K(B),n}(\D)) $. Since, for $ i = 2, \dots , n $, $ j_n + j_i \notin K $, we must have $ j_n + j_1 > k_p $. This shows that the second term is also in $ B(\C^n, \mathcal{H}^{\infty}_{K,n}(\D)) $. Thus $ G(z)^T + Q_{F(z)}Q_{G(z)}^T\frac{F_0^{\ast}}{\|F_0\|^2} \in \mathcal{H}^{\infty}_{K(B),n}(\D) $.

The norm estimate is obtained as in the proof of Theorem \ref{CBH}.

\end{proof}

\begin{proof}[Proof of Theorem \ref{HKB2}]

Since $ F_0 = \mathbf{0} $, we have $ F(z) = B^{j_1}(z)F_{\alpha}(z) $, where $F_{\alpha} \in \left(H^{\infty}(\D)\right)^n $ and $ F_{\alpha}(z) $ has a nonzero constant term. As in the proof of Theorem \ref{CBH} in the case where $ F_c = \mathbf{0} $, there exists $ G_{\alpha} \in H^{\infty}(\D) $ such that $ F_{\alpha}(z)G_{\alpha}(z)^T = h_{\alpha}(z) \all z \in \D $, where $ B^{2j_1}(z)h_{\alpha} = h(z) $. Thus
\begin{equation*}
F(z)B^{j_1}(z)G_{\alpha}(z)^T = B^{2j_1}(z)h_{\alpha}(z) = h(z) \all z \in \D \text{.}
\end{equation*}

If $ j_1 > k_p $, then $ B^{j_1}(z)G_{\alpha}(z) = B^{k_p+1}(z)[B^{j_1-k_p-1}(z)G_{\alpha}(z)] \in \mathcal{H}^{\infty}_{K(B)}(\D) $, and we are done.

If $ j_1 < k_p $, then suppose $ K - j_1 $ defines an algebra $ H^{\infty}_{K-j_1}(\D) $. Then $ F_{\alpha} \in \mathcal{H}^{\infty}_{(K-j_1)(B)}(\D) $ and since the constant term of $ F_{\alpha} $ is nonzero, then by Theorem \ref{HKB1} we may assume $ G_{\alpha} \in \mathcal{H}^{\infty}_{(K-j_1)(B)}(\D) $. Then $ B^{j_1}(z)G_{\alpha}(z) \in \mathcal{H}^{\infty}_{K(B)}(\D) $.

Finally, $ \|V\|_{\infty} \leq \|G_{\alpha}\|_{\infty} $.

\end{proof}

\section{Further Results and Questions}

\subsection{Radical Ideals}

Consider the radical of the ideal generated by the functions $ f_i $,  
\begin{equation*}
\rad(\lbrace f_i \rbrace_{i=1}^n ) = \lbrace h \in H^{\infty} : \exist q \in \mathbb{N} \text{ with } h^q \in \mathcal{I}(\lbrace f_j \rbrace_{j=1}^n) \rbrace \text{.}
\end{equation*}
Equation \eqref{eq:wolff} actually provides a characterization for membership in the radical ideal. We obtain similar characterizations for algebras of form $ \C + BH^{\infty}(\D) $ and $ H^{\infty}_K(\D) $. For the first type of algebra this result is immediate, but the second type of algebra requires a bit of discussion.

Let $ F \in \mathcal{H}^{\infty}_{K,n}(\D) $ and $ h \in H^{\infty}_K(\D) $. Clearly, if $ h \in \rad(\mathcal{I}) $, where $ \mathcal{I} $ is the ideal generated by the entries in $ F(z) $, then 
\begin{equation*}
M[F(z)F^{\ast}(z)]^{\frac{1}{2}} \geq | h^q(z)| \all z \in \D
\end{equation*}
for some $ M > 0 $, $ q \in \mathbb{N} $. For the converse, we have two cases. If $ F(0) \neq \mathbf{0} $, the result follows from Theorem \ref{HK1}. If $ F(0) = \mathbf{0} $, then $ F(z) = z^mF_m(z) $ and $ h(z) = z^mh_m(z) $ as in the proof of Theorem \ref{HK2}. We use Wolff's Theorem to obtain a $ G \in \left(H^{\infty}(\D)\right)^n $ and $ q \in \mathbb{N} $ such that $ F(z)G(z)^T = h^q(z) \all z \in \D $. Take $ L \in \mathbb{N} $ such that $ mL > k_p $. Then
\begin{align*}
h^{p+L}(z) &= F(z)[h^L(z)G(z)^T] = F(z)[z^{mL}h_m(z)G(z)^T] \text{.}
\end{align*}
We thus take $ U(z)^T = z^{mL}h_m(z)G(z)^T $. Since $ mL > k_p $, $ U(z) \in H^{\infty}_K(\D) $. This shows that $ h \in \rad(\mathcal{I}) $.

\subsection{A Full Extension of Wolff's Theorem}

The added assumption in Theorem \ref{HK2} that $ H_{K-m}^{\infty}(\D) $ is an algebra (as well as the similar assumption in Theorem \ref{HKB2}) was necessary for our proof. For example, if $ K = \{1, 2, 5\} $, then $ H_K^{\infty}(\D) $ is an algebra, but $ K - 3 = \lbrace 2 \rbrace $ does not define an algebra. It remains an open question whether the generalized ideal result and Wolff's Theorem can be fully extended to the subalgebras $ H_K^{\infty}(\D) $ and $ H_{K(B)}^{\infty}(\D) $.\\

\subsection{Improving Estimates for $F(\alpha)$ Near Zero}

In Theorem \ref{CBH}, the norm estimate for the solution improves as $ F(\alpha) \rightarrow \infty $, and explodes as $ F(\alpha) \rightarrow 0 $; however, if 0 is actually attained, the best estimate results. We encounter a similar dilemma with the other theorems. In each case, this is due to the construction of the solution, but it is rather counterintuitive. One would like to find a solution that exhibits better behavior as $ F(\alpha) $ heads to zero.\\

Acknowledgement: The authors would like to thank the referee for the careful review and for the help in improving the presentation of the paper.


\begin{thebibliography}{99}

\bibitem{carleson} L. Carleson, \emph{Interpolation by bounded analytic functions and the corona
problem}, Annals of Math. \textbf{76} (1962), 547-559.

\bibitem{cegrell1} U. Cegrell, \emph{A generalization of the corona theorem in the unit disc}, Math. Z. \textbf{203} (1990), 255-261

\bibitem{cegrel2l} \bysame, \emph{Generalizations of the corona theorem in the unit disc}, Proc. Royal
Irish Acad. \textbf{94} (1994), 25-30.


\bibitem{druryarveson} S. Costea, E. Sawyer, and B. Wick, \emph{The Corona Theorem for the
Drury-Arveson Hardy space and other holomorphic Besov-Sobelov spaces on the unit ball in $\C^{n}$}, Anal. PDE \textbf{4} (2011), 499-550

\bibitem{DPRS} K. R. Davidson, V. I. Paulsen, and M. Ragupathi, and D. Singh, \emph{ A constrained
Nevanlinna-Pick theorem}, Indiana Math. J. \textbf{58}  (2009), no.2, 709--732.

\bibitem{garnett} J. B. Garnett, \emph{Bounded Analytic Functions}, Academic Press, New York (1981)

\bibitem{MSW} R. Mortini, A. Sasane, and B. Wick, \emph{The corona theorem and stable rank for $\C+BH^{\infty}(\D)$}, Houston J. Math. \textbf{36} (2010), no. 1, 289-302.

\bibitem{nikolski} N. K. Nikolski, \emph{Treatise on the Shift Operator}, Springer-Verlag, New York
(1985).

\bibitem{pathi} M. Ragupathi, \emph{Nevanlinna-Pick interpolation for $\C+BH^{\infty}(\D)$}, Integral Equa.
Oper. Theory \textbf{63} (2009), 103-125.

\bibitem{pau} J. Pau, \emph{On a generalized corona problem on the unit disc}, Proc. Amer. Math. Soc. \textbf{133} (2004) no. 1, 167-174.

\bibitem{rao} K. V. R. Rao, \emph{On a generalized corona problem}, J. Analyse Math. \textbf{18} (1967), 277-278.

\bibitem{rosenblum} M. Rosenblum, \emph{A corona theorem for countably many functions}, Integral
Equa. Oper. Theory \textbf{3} (1980), no. 1, 125-137.

\bibitem{ryle} J. Ryle and T. Trent, \emph{A corona theorem for certain subalgebras of $
H^{\infty}(\D) $}, Houston J. Math. \textbf{37} (2011), no. 4, 1143-1156.

\bibitem{ryle2} \bysame, \emph{A corona theorem for certain subalgebras of
$H^{\infty}(\D)$ II}, Houston J. Math \textbf{38} (2012), no. 4, 1277-1295.
\bibitem{scheinberg} S. Scheinberg, \emph{Cluster sets and corona theorems in Banach spaces of
analytic functions}, Lecture Notes in Mathematics, Springer, New York, 1976.

\bibitem{sola} J. Solazzo, \emph{Interpolation and Computability}, Ph.D. Thesis, University of
Houston, 2000.

\bibitem{tolokonnikov} V. A. Tolokonnikov, \emph{The corona theorem in algebras of smooth
functions}, Translations (American Mathematical Society), \textbf{149} (1991) no. 2, 61-95.

\bibitem{treil2} S. R. Treil, \emph{Estimates in the corona theorem and ideals of $ H^{\infty} $: A
problem of T. Wolff}, J. Anal. Math \textbf{87} (2002), 481-495

\bibitem{T3}\bysame, \emph{The problem of ideals of $H^{\infty}(\mathbb{D})$:
Beyond the exponent $\frac{3}{2}$,} J. Fun. Anal. \textbf{253} (2007), 220-240.

\bibitem{operatortheory} T. Trent, \emph{Function theory problems and operator theory}, Proceedings
of the Topology and Geometry Research Center, TGRC-KOSEF, Vol. 8, Dec. 1997.

\bibitem{estimate} \bysame, \emph{An estimate for ideals in $ H^{\infty}(\D) $}, Integr. Equat.
Oper. Th. \textbf{53} (2005), 573-587.

\bibitem{anote} \bysame, \emph{A note on multiplication algebras on reproducing kernel Hilbert
spaces}, Proc. Amer. Math. Soc. \textbf{136} (2008), 2835-2838.

\bibitem{wolff} T. Wolff, \emph{A refinement of the corona theorem}, in Linear and Complex Analysis
Problem Book, by V. P. Havin, S. V. Hruscev, and N. K. Nikolski (eds.), Springer-Verlag, Berlin
(1984).




\end{thebibliography}
\end{document}